\newcommand{\RR}{\mathbb R}
\newcommand{\TT}{\mathbb T}
\newcommand{\pat}{\partial_t}
\newcommand{\pax}{\partial_x}
\newcommand{\jeps}{\mathcal{J}_\epsilon*}
\newcommand{\vv}[1]{\ensuremath{ {\bf #1} }}
\newcommand{\pderiv}[2]{\ensuremath{ \frac{\partial #1}{\partial #2} }}
\newcommand{\cs}{\ensuremath{ c_{\rm s}^2 }}
\newcommand{\rh}{\ensuremath{ \langle\rho\rangle }}
\newcommand{\re}{\text{Re}}
\newcommand{\im}{\text{Im}}
\newcommand{\rhomax}{\ensuremath{ \Vert\rho\Vert_{L^\infty} }}
\newcommand{\rhomm}{\ensuremath{ \Vert\rho_0\Vert_{L^\infty} }}
\newcommand{\cambio}[1]{{ #1}}
\newcounter{comentcount}
\newcounter{teocount}
\newtheorem{lem}{Lemma}
\newtheorem{prop}{Proposition}
\newtheorem{teo}[teocount]{Theorem}
\newenvironment{coment}
{\stepcounter{comentcount} {\bf \tt Remark} {\bf\tt\arabic{comentcount}} }{ }
\title{An approximate treatment of gravitational collapse}
\author{Yago Ascasibar$^{\mbox{{\footnotesize 1}}}$, Rafael Granero-Belinch\'on$^{\mbox{{\footnotesize 2},{\footnotesize 3}}}$ and Jos\'e Manuel Moreno}
\begin{document}

\maketitle 
\footnotetext[1]{Email: \texttt{yago.ascasibar@uam.es}\\
Universidad Aut\'onoma de Madrid\\
Departamento de F\'{i}sica Te\'orica\\
Campus de Cantoblanco,
28049 - Madrid (Spain)}

\footnotetext[2]{Email: \texttt{r.granero@icmat.es}\\
Consejo Superior de Investigaciones Cient\'ificas\\
Instituto de Ciencias Matem\'aticas (CSIC-UAM-UC3M-UCM)\\
C/Nicol\'as Cabrera, 13-15,
Campus de Cantoblanco,
28049 - Madrid (Spain)}

\footnotetext[3]{Current address:\\
Email: \texttt{rgranero@math.ucdavis.edu}\\
Department of Mathematics,\\
University of California, Davis,\\
CA 95616, United States}

\vspace{0.3cm}
%--------------------------------------------------------------------------
\begin{abstract}
%--------------------------------------------------------------------------

This work studies a simplified model of the gravitational instability of an initially homogeneous infinite medium, represented by $\TT^d$, based on the approximation that the mean fluid velocity is always proportional to the local acceleration.
It is shown that, mathematically, this assumption leads to the restricted Patlak-Keller-Segel model considered by J\"ager and Luckhaus or, equivalently, the Smoluchowski equation describing the motion of self-gravitating Brownian particles, coupled to the modified Newtonian potential that is appropriate for an infinite mass distribution.
We discuss some of the fundamental properties of a non-local generalization of this model where the effective pressure force is given by a fractional Laplacian with $0<\alpha<2$, and illustrate them by means of numerical simulations.
Local well-posedness in Sobolev spaces is proven, and we show the smoothing effect of our equation, as well as a \emph{Beale-Kato-Majda}-type criterion in terms of $\rhomax$.
It is also shown that the problem is ill-posed in Sobolev spaces when it is considered backward in time.
Finally, we prove that, in the critical case (one conservative and one dissipative derivative), $\rhomax(t)$ is uniformly bounded in terms of the initial data for sufficiently large pressure forces.

\end{abstract}

%--------------------------------------------------------------------------
\textbf{Keywords}: Gravitational collapse, Star formation, Patlak-Keller-Segel model, Fractional calculus, Well-posedness, Instant analyticity, Blow-up, Simulation

%--------------------------------------------------------------------------

%%
%% Start line numbering here if you want
%%
% \linenumbers

%% main text

%--------------------------------------------------------------------------
\section{Introduction}
%--------------------------------------------------------------------------

Gravitational collapse - the infall of a body under its own gravity - is of paramount importance in Astrophysics.
Stars and galaxies grow from initially small perturbations of a uniform density medium, which is an unstable equilibrium solution of the Newtonian equations of motion.
Upon the action of gravity, overdense regions attract the neighbouring material, accreting all mass within a certain radius of influence into a singular point.
Some of the gravitational energy, though, transforms into heat and/or random motions that oppose collapse.
In most real-life situations (black holes being the exception), the ensuing pressure gradient is able to counteract the gravitational acceleration, and a non-singular equilibrium state is reached.

Mathematically, the evolution of a fluid with mass density $\rho(\vv{r},t)$ and mean local velocity $\vv{v}(\vv{r},t)$ is described by the Euler equations, that amount to the conservation of mass
\begin{equation}
\pat \rho + \nabla\cdot( \rho \vv{v} ) = 0
\end{equation}
and momentum
\begin{equation}
\pat \vv{v} + (\vv{v}\cdot\nabla)\, \vv{v} = -\frac{\nabla P}{\rho} -\nabla U  \label{eq_momentum}
\end{equation}
supplemented by the Poisson equation
\begin{equation}
\Delta U = S_d\, G\, \left( \rho-\rh \right)
\label{eq_Poisson}
\end{equation}
that relates the gravitational potential $U(\vv{r},t)$ to the density distribution of the fluid, where $S_d$ denotes the surface of a unit sphere in $d$ dimensions.
In order to close the system, the initial data in $\rho$ and $\vv{v}$ at some arbitrary time $t_0$ have to be specified, as well as an (effective) barotropic equation of state to relate the gas density and pressure by a function $P(\rho)$ or, equivalently, a sound speed
\begin{equation}
\cs(\rho) = \pderiv{ P(\rho) }{ \rho }
\end{equation}
where $P$ accounts for an isotropic pressure due to the internal random motions of the constituent particles of the fluid, and we will focus on the particular case of a constant sound speed, corresponding to an isothermal equation of state.

The term $\rh$ in equation~\eqref{eq_Poisson} corresponds to the average gas density, and it vanishes for any system with finite mass contained in an infinite space.
Here, we will consider the collapse of an initial perturbation (e.g a protostar) out of a gas cloud that is assumed to be roughly uniform on scales much larger than those relevant to the problem.
Such conditions may be realized by a periodic cubic box of sufficiently large size $L$, i.e. a three-dimensional torus $\TT^3$, as the spatial domain.
In general, for a $d$-dimensional torus with periodicity $L$,
\begin{equation}
\rh = \frac{1}{L^d}\int_{\TT^d}\rho(x)\ dx.
\label{eq_rho_ave}
\end{equation}
It is interesting to note that this average background density is often neglected in Astrophysics, leading to the so-called `Jeans swindle' in the analysis of the stability of an infinite medium \cite{Jeans_1902}.
A deeper discussion of this issue and the validity of equation~\eqref{eq_Poisson} is provided in~\ref{sec_Jeans}.

We will also make the approximation, akin to Darcy's Law in Earth sciences, that the acceleration term that appears in the momentum conservation equation is always proportional to the fluid velocity
\begin{equation}
\pat \vv{v} + (\vv{v}\cdot\nabla)\, \vv{v} \approx \frac{ \vv{v} }{ \tau }
\label{eq_Darcy}
\end{equation}
where $\tau$ is a constant with dimensions of time.
Physically, this relation arises if a strong friction force is present (see e.g. \cite{Che} and references therein).
However, it will also hold \emph{approximately} during the earliest stages of the gravitational collapse of an initially homogeneous medium.
This assumption greatly simplifies the problem, and it is arguably one of the key ingredients of the proposed model.
Its physical motivation and the relation to other, more traditional approaches, are discussed in~\ref{sec_Darcy}.

Substituting this approximation in expression~\eqref{eq_momentum} and taking the divergence, one obtains
$$
\frac{1}{\tau} \nabla\cdot \left( \rho\vv{v} \right) =  - \Delta P - \nabla \left( \rho \nabla U \right),
$$
which, using the continuity and Poisson equations and writing the equation of state in terms of the sound speed, transforms into
$$
\frac{ \pat\rho }{ \tau } = \cs \Delta\rho + \nabla \rho\, \nabla U + S_d\, G \rho \left( \rho - \rh \right).
$$
Since $U$ is fully determined by $\rho$, our model consists on a single, non-linear and non-local, partial differential equation that describes the evolution of the gas density.
Denoting the solution of $\Delta u=f-\langle f \rangle$ by $u=T(f)$, defining $\beta = \frac{4\pi^2\cs}{S_d\,G\rh L^2}$, and choosing the units of time, length, and mass such that $\frac{1}{\tau S_d\,G\rh} = 1$, $L = 2\pi$, and $\rh = 1$, respectively, we finally arrive at
\begin{equation}
\label{eq7.1}
\pat \rho = \beta \Delta\rho + \rho(\rho-1) + \nabla\rho \cdot \nabla T(\rho). 
\end{equation}

This equation is well known, and its mathematical properties have been widely studied in different physical contexts. It can be re-written as
$$\pat \rho=\beta\Delta\rho+\nabla\cdot(\nabla U\rho),\;\; \Delta U=\rho-\rh,$$
that bears obvious resemblances to the vorticity equation for the incompressible Navier-Stokes equations in two dimensions
$$\pat \omega=\nu\Delta\omega+\nabla\cdot(\nabla^{\bot}\psi\omega),\;\; \Delta \psi=\omega,$$
and it has been proposed by J\"ager and Luckhaus \cite{Jaeger_Luckhaus_93} as a simplified version of the classical (parabolic-elliptic) Patlak-Keller-Segel model \cite{Patlak_53,Keller_Segel_70} of chemotaxis in biological systems (see e.g. \cite{Bi6,BCM,blanchet2009critical,blanchet2010functional} and references therein; a recent review of the results concerning this equation can be found in \cite{blanchet2011parabolic}).
The non-periodic (bounded or otherwise) case with $\rh=0$, i.e. the so-called Smoluchowski-Poisson system, has been previously studied in the context of gravitational collapse by P. Biler and co-workers in the mathematical literature (where the problems of existence, conditions for blow-up, radial solutions, stationary solutions, and other qualitative properties have been addressed in \cite{Bi3,Bi4,Bi5}), and by P.~H. Chavanis and co-workers (see e.g. \cite{Cha} and references therein) in the physical literature.
In particular, the stability of an infinite homogeneous medium with non-zero \rh\ and the phase transitions between homogeneous and inhomogeneous states have been discussed in \cite{Chb} and \cite{Chc}, respectively.
L. Corrias, B. Perthame, and H. Zaag have proven that, for small data in $\|\rho_0\|_{L^{d/2}}$, there are global in time weak solutions to equation \eqref{eq7.1} in $d>1$, and blow-up \cambio{can occur} if the smallness condition does not hold (see Theorem 1.1 and 1.2 in \cite{corrias2004global}).
The case of measure-valued weak solutions has been considered by T.Senba and T.Suzuki \cite{senba2002weak}.

In principle, equation~\eqref{eq7.1} with $\beta=0$ would be a model for a perfectly collisionless fluid with negligible random motions (e.g. cold dark matter), whereas $\beta>0$ corresponds to an isothermal ideal gas.
However, the hydrodynamic approximation (i.e. the Euler equation) may also describe a collisionless system, as long as it features an (approximately) isotropic velocity dispersion tensor \cite{BT}.
Here, we would like to consider the generalized equation
\begin{equation}\label{eq7}
\pat \rho = -\beta \Lambda^{\alpha}\rho + \rho(\rho-1) + \nabla\rho \cdot \nabla T(\rho)
\end{equation}
where the fractional Laplacian $\Lambda^\alpha=(-\Delta)^{\alpha/2}$ is defined using Fourier theory,
$$
\widehat{\Lambda^\alpha u}=|\xi|^\alpha\hat{u},
$$
as an alternative intermediate case that interpolates smoothly between cold dark matter and an ordinary fluid.
Physically, this is a non-local adhesion model.
Due to the non-local character of the diffusive operators $\Lambda^\alpha$, the pressure forces now contain information about the total distribution of mass; in other words, the pressure at a given point $x$ does not depend only on $\rho(x)$ but on the overall distribution $\rho(y)$ for all $y$.
Such an equation may develop singularities in a finite time depending on the initial data.
The case $\alpha=1$ is critical in the sense that we have a conservative derivative (the term $\nabla\rho\cdot\nabla U$) and a \emph{`dissipative'} one in the term $-\beta\Lambda\rho$. In the Keller-Segel community, the word `critical' usually refers to the case
in which the scaling invariance of the PDE matches that of the $L^1$ norm, which happens for $\alpha=d$ in $d=\{1,2\}$.

Albeit its physical interpretation is not as straightforward as the Smoluchowski-Poisson or Patlak-Keller-Segel models, the non-local generalization \eqref{eq7} has also received considerable attention during the past years (e.g. \cite{escudero2006fractional,biler2010blowup,bournaveas2010one}).
In particular, D. Li, J. Rodrigo and X. Zhang \cite{LRZ} have established local existence (using the linear semigroup and Duhamel's formula), as well as a continuation criterion derived by contradiction (see our Proposition \ref{SingF}) and some results concerning blow-up in finite time (Proposition \ref{StarF}).
Global existence when the initial data is small in $L^1$ has been recently addressed in \cite{N1} and \cite{N2}.
We will show that, in such a case, $\rhomax(t)$ may be bounded uniformly, and global existence is obtained as a corollary.
We state all our results as a condition on the size of the pressure forces, $\beta$; our sufficient conditions for gravitational stability are \cambio{closely} related to the classical Jeans criterion, $\beta > 1$ \cite{Jeans_1902}, widely applied by the astrophysical community (see~\ref{sec_Jeans}).

The paper is organized as follows:
In Section~\ref{sec_WellPosed}, we show that the problem is locally well posed (forward in time) in Sobolev spaces (Theorem \ref{WellP}) and a \emph{Beale-Kato-Majda}-type criterion involving $\rhomax$ (Proposition \ref{SingF}).
These results were already known (see e.g. Theorems 1.1 and 1.4 in \cite{LRZ}) and are proved here by different techniques.
We also present a result concerning the smoothing effect of our model (Theorem \ref{SE}), due to the linear part when $\beta>0$, and prove that the problem is ill posed backward in time. Although not unexpected, the smoothing effect is, to the best of our knowledge, a new result, and ill-posedness follows as a consequence of the instant analyticity property.
The conditions for gravitational stability, i.e. the decay of small perturbations (Theorem~\ref{Max1_d2}) and fluctuations of arbitrary size (Theorem~\ref{Max}), are investigated in Section~\ref{sec_InfNorm}.
These results are briefly compared with the well-known critical mass phenomenon in higher dimensions.
Using Proposition \ref{SingF}, these theorems imply global in time existence, which we are not aware that has ever been shown for equation~\eqref{eq7} (c.f. Theorem~1.1 in~\cite{corrias2004global}).
A finite-time blow-up result for some initial data in a special class is proved in Theorem~1.10 of~\cite{LRZ}.
These initial data are concentrated near the origin in terms of a parameter $\delta$ such that, for $\delta<<1$, the maximum of the initial datum grows, roughly speaking, like $1/\delta$.
% Taking this Theorem into account, our global in time result for small data seems to be quite sharp.
\cambio{Here} we also give a minimum time for blow-up which is the periodic version of Theorem 1.9 in \cite{LRZ}.
Numerical simulations are presented in Section~\ref{sec_Sims}.

%--------------------------------------------------------------------------
\section{Well-posedness}
\label{sec_WellPosed}
%--------------------------------------------------------------------------

%--------------------------------------------------------------------------
\subsection{Well-posedness in Sobolev spaces}
%--------------------------------------------------------------------------

For the sake of mathematical consistency, we show in this section that equation \eqref{eq7} is well posed if $\beta\geq0$ and $0<\alpha\leq2$.
First, let us note that the average density $\rh$ defined in equation \eqref{eq_rho_ave} is conserved during the evolution of the system, and thus it is a constant that only depends on the initial data\footnote{Without loss of generality, one can set $\rh=1$ by an adequate choice of units.}.
We state this claim in the following Lemma:

%--------------------------------------------------------------------------
\begin{lem}[Mass conservation]
Let $\rho(x,t)$ be a classical solution. Then, the total mass is conserved
$$
\int_{\TT^d}\rho(x,t)\ dx=\int_{\TT^d} \rho(x,0)\ dx,
$$
and, as a consequence, $\rh$ is a constant.
\end{lem}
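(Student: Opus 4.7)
The plan is to rewrite the equation in (almost) divergence form and then integrate over the torus. The key algebraic observation is that, after the normalization $\rh = 1$ and using $\Delta T(\rho) = \rho - \rh = \rho - 1$, one has
$$
\nabla\cdot\bigl(\rho\,\nabla T(\rho)\bigr) \;=\; \nabla\rho\cdot\nabla T(\rho) + \rho\,\Delta T(\rho) \;=\; \nabla\rho\cdot\nabla T(\rho) + \rho(\rho-1),
$$
so the entire nonlinear part of \eqref{eq7} is the divergence of the flux $\rho\,\nabla T(\rho)$. Thus \eqref{eq7} can be rewritten as
$$
\pat\rho = -\beta\,\Lambda^{\alpha}\rho + \nabla\cdot\bigl(\rho\,\nabla T(\rho)\bigr).
$$

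I would then integrate both sides over $\TT^d$. Since $\rho$ is a classical solution (hence smooth enough to exchange $\pat$ and the spatial integral), the left-hand side becomes $\frac{d}{dt}\int_{\TT^d}\rho(x,t)\,dx$. The divergence term vanishes by periodicity (no boundary contribution on $\TT^d$), and the fractional Laplacian term vanishes because its Fourier definition $\widehat{\Lambda^\alpha\rho}(\xi)=|\xi|^\alpha\hat\rho(\xi)$ kills the zero mode, and $\int_{\TT^d}\Lambda^\alpha\rho\,dx = (2\pi)^d\,\widehat{\Lambda^\alpha\rho}(0)=0$. Consequently,
$$
\frac{d}{dt}\int_{\TT^d}\rho(x,t)\,dx = 0,
$$
and integrating in time yields the stated identity; since $\rh = L^{-d}\int_{\TT^d}\rho(x,t)\,dx$ is a fixed multiple of a conserved quantity, it is itself constant, consistent with the normalization $\rh=1$ used to derive the form \eqref{eq7}.

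There is essentially no serious obstacle: the argument reduces to the divergence-form rewriting and the vanishing of the zero Fourier mode of $\Lambda^\alpha\rho$. The only mild point worth flagging is the consistency of the normalization, namely that identifying $\rh$ inside $T$ and the constant $1$ in $\rho(\rho-1)$ is legitimate precisely because $\int_{\TT^d}\rho\,dx$ is conserved; thus the derivation of \eqref{eq7} is self-consistent, and the lemma closes the loop.
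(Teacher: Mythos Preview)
Your argument is correct and matches the paper's proof: both use the Fourier-side identity $\widehat{\Lambda^\alpha\rho}(0)=0$ to kill the diffusive term and the divergence (periodicity) to kill the nonlinear term. You have simply made the divergence-form rewriting $\rho(\rho-1)+\nabla\rho\cdot\nabla T(\rho)=\nabla\cdot(\rho\,\nabla T(\rho))$ explicit, whereas the paper invokes the Divergence Theorem in a single line.
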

%--------------------------------------------------------------------------
\begin{proof}
We observe that, using $2\pi \hat{f}(0)=\int_\TT f(x)\, dx$, the term 
$$
\int_\TT\Lambda^\alpha u=0.
$$
Now, integrating and using the Divergence Theorem the proof follows.
\end{proof}

To prove the existence and uniqueness of classical solution, we proceed as in \cite{bertozzi-Majda}.
We obtain some \emph{`a priori'} (\emph{i.e.} assuming the existence of classical solution) bounds for the usual norm in the space $H^k(\TT^d)$ and then regularize equation \eqref{eq7} so that all the regularized problems have a classical solution for a uniform time $T$.
To conclude, we use the \emph{`a priori'} bound to show that the solutions to the regularized problem form a Cauchy sequence whose limit is the solution to the original equation.
In order to simplify the notation, we will abbreviate $\rho(x,t) = \rho(x)$, or simply $\rho$, throughout the rest of the paper.

%--------------------------------------------------------------------------
\begin{teo}[Well-posedness]\label{WellP}
Given equation \eqref{eq7} with $\beta\geq0$, $0<\alpha\leq2$, and initial data $0\leq\rho_0(x,0)\in H^k(\TT^d)$, where $k> d/2+2$, there exists a time $\tau=\tau(\rho_0)>0$ and a unique solution $\rho\in C([0,\tau],H^s)\cap L^\infty([0,\tau],H^k(\TT))$, for $s<k$.
Moreover, $\rho\in C^1([0,\tau],C(\TT))\cap C([0,\tau],C^2(\TT))$.
\end{teo}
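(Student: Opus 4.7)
I follow the Bertozzi-Majda programme announced by the authors: derive an a priori $H^k$ energy bound for classical solutions, solve a regularized problem via Picard-Lindel\"of, show the regularized solutions exist on a common interval with a uniform bound, and pass to the limit using a Cauchy argument in a weaker norm. Continuity in time, uniqueness, and the $C^1([0,\tau];C)$ regularity will follow at the end from the equation itself and Sobolev embedding.

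The heart of the argument is the a priori estimate on $E(t)=\|\rho\|_{H^k}^2$. Differentiating in time and splitting by multi-index, the linear term $-\beta\Lambda^\alpha\rho$ contributes $-\beta\sum_{|\gamma|\le k}\|\Lambda^{\alpha/2}\partial^\gamma\rho\|_{L^2}^2\le 0$ and can be discarded (it will become essential for the smoothing effect of Theorem \ref{SE}). The reaction $\rho(\rho-1)$ is handled by the Banach-algebra property of $H^k$, valid for $k>d/2$, contributing at most $C\,E(t)^{3/2}+C\,E(t)$. The delicate piece is $\nabla\rho\cdot\nabla T(\rho)$: since $T$ gains only two derivatives, a naive Leibniz expansion loses one. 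The rescuing structural identity is
$$
\int_{\TT^d}\partial^\gamma\rho\,\nabla T(\rho)\cdot\nabla\partial^\gamma\rho\,dx=-\tfrac12\int_{\TT^d}(\partial^\gamma\rho)^2\,\Delta T(\rho)\,dx=-\tfrac12\int_{\TT^d}(\partial^\gamma\rho)^2(\rho-\rh)\,dx,
$$
which is bounded by $C\|\rho\|_{L^\infty}E(t)\le C\,E(t)^{3/2}$ via Sobolev embedding. The lower-order commutators $[\partial^\gamma,\nabla T(\rho)\cdot]\nabla\rho$ are estimated by a Kato-Ponce inequality together with the elliptic regularity bound $\|T(\rho)\|_{H^{k+2}}\le C\|\rho\|_{H^k}$. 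Altogether $\tfrac{d}{dt}E(t)\le C\,E(t)^{3/2}+C\,E(t)$, and Riccati comparison yields a lifespan $\tau=\tau(\|\rho_0\|_{H^k})>0$.

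For the approximation step I mollify the nonlinear terms with $\jeps$ as in \cite{bertozzi-Majda}, producing an ODE in the Banach space $H^k$ with a locally Lipschitz right-hand side; Picard-Lindel\"of then yields $\rho^\epsilon$. The same energy calculation applies to $\rho^\epsilon$ with constants independent of $\epsilon$, so every $\rho^\epsilon$ extends to the common interval $[0,\tau]$ with a uniform $H^k$ bound. A Gr\"onwall estimate for $w=\rho^{\epsilon_1}-\rho^{\epsilon_2}$ in $L^2$---using only the $L^\infty$ bounds on $\rho^{\epsilon_j}$ and $\nabla T(\rho^{\epsilon_j})$ guaranteed by the uniform $H^k$ bound---shows $\{\rho^\epsilon\}$ is Cauchy in $C([0,\tau];L^2)$. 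Interpolation with the uniform $H^k$ bound upgrades convergence to $C([0,\tau];H^s)$ for every $s<k$, and weak-$*$ compactness delivers $\rho\in L^\infty([0,\tau];H^k)$. Uniqueness follows from the same $L^2$-difference estimate applied to two solutions. Finally, $k>d/2+2$ gives $H^k\hookrightarrow C^2$, so $\rho\in C([0,\tau];C^2(\TT^d))$, and reading $\partial_t\rho$ off the equation provides $\rho\in C^1([0,\tau];C(\TT^d))$.

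The main obstacle I expect is precisely the apparent loss of one derivative in the transport term; the integration-by-parts identity above, which rests on $(\partial^\gamma\rho)\nabla\partial^\gamma\rho=\tfrac12\nabla(\partial^\gamma\rho)^2$ together with $\Delta T(\rho)=\rho-\rh$, is the structural cancellation that enables the $H^k$ energy estimate to close and the whole scheme to succeed.
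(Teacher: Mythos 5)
Your proposal is correct and follows essentially the same Bertozzi--Majda route as the paper: the same a priori $H^k$ energy estimate, with the integration-by-parts cancellation $\int\partial^\gamma\rho\,\nabla T(\rho)\cdot\nabla\partial^\gamma\rho\,dx=-\tfrac12\int(\partial^\gamma\rho)^2(\rho-\rh)\,dx$ absorbing the apparent derivative loss in the transport term, followed by mollification, a uniform lifespan, a Cauchy argument in a weaker norm upgraded by interpolation, and an $L^2$-Gr\"onwall uniqueness estimate. The only differences are cosmetic (you sum over all multi-indices and cite Kato--Ponce explicitly, while the paper works with the equivalent norm $\|\cdot\|_{L^2}^2+\|\partial_x^k\cdot\|_{L^2}^2$ and details only $k=3$, $d=1$).
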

%--------------------------------------------------------------------------
\begin{proof}
For simplicity, we only work out in detail the case $k=3$ in one spatial dimension, the general case with $d=2,3$ and $k>3$ being analogous.

\emph{(Existence:)} We define the norm in $H^k(\TT)$ as 
$$
\|\cdot\|^2_{H^{k}}=\|\cdot\|_{L^2}^2+\|\pax^k\cdot\|_{L^2}^2.
$$
We multiply the equation by $\rho$ and integrate by parts:
$$
\int_\TT \rho(x)\pat \rho(x)dx=-\beta\int_\TT\rho(x)\Lambda^{\alpha}\rho(x)+ \int_\TT(\rho(x))^ 2(\rho(x)-1)dx+\int_\TT\frac{\pax (\rho(x))^2}{2}\pax Udx
$$
Using Parseval Theorem and H\"{o}lder inequality we get the estimate
\begin{equation}\label{L2}
\frac{d}{dt}\|\rho\|_{L^2}^2\leq -2\beta\|\Lambda^{\alpha/2}\rho\|_{L^2}^2+c_1\rhomax\|\rho\|_{L^2}^2\leq c(\|\rho\|_{H^3}+1)^2\|\rho\|_{H^3}.
\end{equation}
In the same way, for $\|\pax^3\rho\|_{L^2}^2$, we obtain the following bound:
\begin{equation}\label{H3}
\frac{d}{dt}\|\pax^3 \rho\|_{L^2}^2\leq c(\rhomax+\|\pax^2 \rho\|_{L^\infty})\|\rho\|_{H^3}^2\leq c\|\rho\|_{H^3}(\|\rho\|_{H^3}+1)^2.
\end{equation}
Using \eqref{L2} and \eqref{H3}, one finally arrives to
$$
\frac{d}{dt}\|\rho\|_{H^3}\leq c(\|\rho\|_{H^3}+1)^2,
$$
so 
\begin{equation}\label{energybound}
\|\rho\|_{H^3}\leq \frac{\|\rho_0\|_{H^3}}{1-c\|\rho_0\|_{H^3}t}. 
\end{equation}

We only sketch the rest of the proof because it is classical.
In order to show the existence of classical solutions to problem \eqref{eq7}, we consider $\mathcal{J}\in C^\infty_c$, $\mathcal{J}(x)=\mathcal{J}(|x|)$, $\mathcal{J}\geq0$, and $\int_{\RR}\mathcal{J}=1$.
For $\epsilon>0$, we define
\begin{equation}
\mathcal{J}_\epsilon(x)=\frac{1}{\epsilon}\mathcal{J}\left(\frac{x}{\epsilon}\right)
\label{epsi} 
\end{equation}
and consider the regularized problems
$$
\pat \rho^\epsilon=-\beta\jeps\Lambda^\alpha\jeps \rho^\epsilon+\jeps(\jeps\rho^\epsilon(\jeps\rho^\epsilon-1))+
\jeps(\nabla\jeps\rho^\epsilon\cdot\nabla T(\rho^\epsilon)),
$$
where 
$$\Delta T(\rho^\epsilon)= (\jeps\rho^\epsilon-1).$$

In these regularized problems, the total mass is also conserved, and the previous bounds \eqref{L2} and \eqref{H3} hold.
Using classical energy methods (see \cite{bertozzi-Majda}), one may prove that the regularized problems have an unique smooth solution and, using expression \eqref{energybound}, that the sequence $\rho^\epsilon$, indexed in $\epsilon$, is Cauchy in $C([0,\tau],H^s)$, $0\leq s<3$, with $\tau=\tau(\rho_0)\leq \frac{1}{c\|\rho_0\|_{H^3}}$.
Thus, it has a limit $\rho$, which is a classical solution of our problem. In order to show that $\rho\in H^3$ we use that $\rho^\epsilon$ is uniformly bounded in $H^3$ and, due to the strong convergence to $\rho$ in $H^s$ with $s<3$, this implies that $\rho^\epsilon$ are weakly convergent to $\rho$ in $H^3$.

\emph{(Uniqueness:)} Suppose that for the same initial data $\rho_0$ we have two classical solutions $\rho_1,\rho_2\in C([0,\tau],H^3(\TT^d))$. Then the equation for $\rho=\rho_2-\rho_1$ is
$$
\pat \rho=-\beta \Lambda^\alpha \rho+(\rho_2(\rho_2-1)-\rho_1(\rho_1-1))+\nabla \rho_2\cdot\nabla T(\rho_2)-\nabla\rho_1\cdot\nabla T(\rho_1),
$$
so
$$
\frac{1}{2}\frac{d}{dt}\|\rho\|_{L^2}^2\leq -\beta\|\Lambda^{\alpha/2}\rho\|_{L^2}^2+\|\rho\|_{L^2}^2(\|\rho_2\|_{L^\infty}+\|\rho_1\|_{L^\infty})+\int_{\TT^d}\rho\nabla\rho_1\nabla(T(\rho_2)-T(\rho_1))dx.
$$
We observe that the equation for $T(\rho_2)-T(\rho_1)$ is
$$
\Delta (T(\rho_2)-T(\rho_1))=\rho.
$$
Using Poincar\'e inequality we have that 
$$\|\nabla (T(\rho_2)-T(\rho_1))\|_{L^2}\leq c\|\rho\|_{L^2}.$$
Then
$$
\int_{\TT^d}\rho\nabla\rho_1\nabla(T(\rho_2)-T(\rho_1))dx\leq \|\nabla \rho_1\|_{L^\infty}\|\rho\|_{L^2}\|\nabla (T(\rho_2)-T(\rho_1))\|_{L^2}\leq c\|\nabla \rho_1\|_{L^\infty} \|\rho\|_{L^2}^2.
$$
Putting all together we obtain
$$
\frac{d}{dt}\|\rho\|_{L^2}^2\leq c(\rho_2,\rho_1)\|\rho\|_{L^2}^2.
$$ 
Applying Gronwall inequality we conclude the uniqueness.
\end{proof}

We give an alternative proof of a \emph{Beale-Kato-Majda}-type criterion characterising the possible singularities (see \cite{LRZ}): 
%--------------------------------------------------------------------------
\begin{prop}[Singularity formation]\label{SingF}
Suppose that we have
$$ 
\int_0^{t^+}\rhomax<\infty,
$$
Then the solution exists upon time $t^++\sigma$ for a sufficiently small $\sigma$. 
\end{prop}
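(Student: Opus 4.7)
The plan is to argue by contradiction. Suppose the maximal time of existence $T^*$ of the solution satisfies $T^* \le t^+$. By Theorem \ref{WellP}, the local existence time starting from any $t_0<T^*$ is bounded below by $c/\|\rho(t_0)\|_{H^k}$, so maximality forces $\|\rho(t)\|_{H^k}\to\infty$ as $t\to T^*$. I therefore aim to show that, under the hypothesis $\int_0^{t^+}\rhomax\,dt<\infty$, the norm $\|\rho(t)\|_{H^k}$ remains uniformly bounded on $[0,t^+]$, producing the desired contradiction and allowing the solution to be extended a small additional time $\sigma$ by one more application of Theorem \ref{WellP}.

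The first step is to refine the $H^k$ energy estimate derived in the proof of Theorem \ref{WellP}, making explicit the dependence on $\rhomax$. Applying $\pax^k$ to the equation, pairing with $\pax^k\rho$, integrating by parts, and discarding the nonnegative fractional dissipation $\beta\|\Lambda^{\alpha/2}\pax^k\rho\|_{L^2}^2$, the reaction term $\rho(\rho-1)$ is handled by Moser estimates to give a contribution $C\rhomax\|\rho\|_{H^k}^2$. For the nonlocal transport-like term $\nabla\rho\cdot\nabla T(\rho)$, Kato-Ponce commutator estimates together with the identity $\Delta T(\rho)=\rho-\rh$ (which makes $\nabla T$ an operator of order $-1$ so that $\nabla\cdot\nabla T(\rho)=\rho-\rh$ is a zeroth order expression) yield a bound
$$
\left|\int_{\TT^d}\pax^k\rho\,\pax^k(\nabla\rho\cdot\nabla T(\rho))\,dx\right| \le C\bigl(\|\nabla\rho\|_{L^\infty}+\rhomax\bigr)\|\rho\|_{H^k}^2.
$$
Altogether this gives
$$
\frac{d}{dt}\|\rho\|_{H^k}^2 \le C\bigl(\rhomax+\|\nabla\rho\|_{L^\infty}\bigr)\|\rho\|_{H^k}^2.
$$

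The second step is to dominate $\|\nabla\rho\|_{L^\infty}$ by $\rhomax$ up to a logarithmic correction, via the Brézis-Gallouët-Wainger inequality on $\TT^d$,
$$
\|\nabla\rho\|_{L^\infty} \le C(1+\rhomax)\bigl(1+\log^+\|\rho\|_{H^k}\bigr),
$$
which follows from a standard low/high Fourier frequency split, bounding low frequencies by $\rhomax$ and high frequencies by $\|\rho\|_{H^k}$, truncating at the optimal threshold. Substituting in the energy inequality and setting $Y(t)=\log(e+\|\rho(t)\|_{H^k}^2)$ yields
$$
Y'(t) \le C\bigl(1+\rhomax(t)\bigr)Y(t),
$$
and Gronwall gives a double-exponential bound
$$
\|\rho(t)\|_{H^k} \le \bigl(e+\|\rho_0\|_{H^k}^2\bigr)^{\exp\left(C\int_0^t(1+\rhomax(s))\,ds\right)},
$$
which is finite on $[0,t^+]$ by hypothesis, contradicting the blowup assumption.

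The main obstacle is the commutator step: one must ensure that after integrating by parts in the $\nabla\rho\cdot\nabla T(\rho)$ pairing, no $L^\infty$ norm of a derivative strictly higher than $\nabla\rho$ appears, since otherwise the log-Sobolev bound would not suffice to close the Gronwall loop. Exploiting the cancellation $\nabla\cdot\nabla T(\rho)=\rho-\rh$ to move one derivative off the most singular factor is the key technical point that makes the estimate depend only on $\rhomax$ and $\|\nabla\rho\|_{L^\infty}$.
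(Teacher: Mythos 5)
There is a genuine gap in the second step. The inequality you invoke,
$$
\|\nabla\rho\|_{L^\infty} \le C(1+\rhomax)\bigl(1+\log^+\|\rho\|_{H^k}\bigr),
$$
is false: it loses a full derivative, and logarithmic (Br\'ezis--Gallou\"et / Beale--Kato--Majda type) inequalities only hold when the controlling quantity sits at the \emph{same} derivative level as the controlled one. Your proposed frequency split confirms this: the low-frequency block of $\nabla\rho$ up to frequency $N$ is bounded by $\rhomax$ only at the price of a factor polynomial in $N$, so optimizing the threshold yields a polynomial interpolation $\rhomax^{\theta}\|\rho\|_{H^k}^{1-\theta}$, never a logarithm. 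Concretely, $\rho=1+\sin(Nx)$ on $\TT$ has $\rhomax=2$, $\|\pax\rho\|_{L^\infty}=N$ and $\log\|\rho\|_{H^k}\approx k\log N$, so the claimed bound fails for large $N$. Without it your Gronwall loop does not close, and the contradiction argument collapses. (A secondary issue: in $d\ge2$ your commutator estimate also produces $\|\nabla^2 T(\rho)\|_{L^\infty}$, which is a Calder\'on--Zygmund operator applied to $\rho-1$ and is \emph{not} bounded by $\rhomax$; that term, however, genuinely admits a logarithmic bound precisely because it is of order zero in $\rho$, and this is the one place where the paper legitimately uses a BKM-type inequality.)

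The paper closes the derivative gap by an entirely different device. In $d=1$ it tracks the pointwise evolution of $\max_x\pax^2\rho$ (Rademacher plus the C\'ordoba--C\'ordoba argument): the dangerous quadratic term $3(\pax\rho(x_t))^2$ in that evolution is converted, via the Maz'ya--Shaposhnikova (Landau) inequality $|\pax\rho(x)|^2\le 2\rho(x)\|\pax^2\rho\|_{L^\infty}$, into a term \emph{linear} in $\|\pax^2\rho\|_{L^\infty}$ with coefficient $\rhomax$, giving $\|\pax^2\rho\|_{L^\infty}(t)\le\|\pax^2\rho_0\|_{L^\infty}\exp\bigl(c\int_0^t\rhomax\bigr)$; this is then fed into the $H^3$ energy estimate. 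In $d=2$ it first closes an $H^2$ estimate using the logarithmic Calder\'on--Zygmund bound on $\partial_{x_i}\partial_{x_j}U$ (legitimate, same derivative level) and only then bootstraps to higher Sobolev norms using the interpolation $\|Df\|_{L^4}^2\le c\|D^2f\|_{L^2}\|f\|_{L^\infty}$. If you want to salvage your scheme, you must either adopt one of these mechanisms or restructure the energy estimate so that no $L^\infty$ norm of a derivative of $\rho$ of order $\ge 1$ survives; as written, the key inequality you rely on does not exist.
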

%--------------------------------------------------------------------------
\begin{proof}
To clarify the exposition, we consider the case with $d=1$ first. To prove the result we will assume that 
\begin{equation}\label{boundBKM}
\int_0^{t^+}\|\rho\|_{L^\infty}(s)ds=M<\infty
\end{equation}

Using \eqref{L2} and \eqref{H3} we have 
$$
\frac{d}{dt}\|\rho\|_{H^3}\leq c(\rhomax+\|\pax^2\rho\|_{L^\infty}+1)\|\rho\|_{H^3},
$$
so, using Gronwall inequality we conclude that
$$
\|\rho\|_{H^3}(t)\leq \|\rho_0\|_{H^3}e^{c\int_0^t\rhomax(s)+\|\pax^2\rho\|_{L^\infty}(s)ds+t}.
$$
Now we follow the technique exposed in \cite{cor2}: we use Rademacher Theorem for $\pax^2\rho(x_t)=\|\pax^2\rho\|_{L^\infty}(t).$ We can do that if the solution is sufficiently smooth. The evolution of this quantity is given by
$$
\pat \pax^2\rho(x_t)=-\beta\Lambda^\alpha\pax^2\rho(x_t)+\pax^2\rho(x_t)(3(\rho(x_t)-1)+\rho(x_t))+3(\pax \rho(x_t))^2.
$$
We use the generalized Landau inequality (see \cite{MS})
$$
|\pax\rho (x)|^2\leq 2\rho(x)\|\pax^2 \rho\|_{L^\infty}.
$$
Due to this pointwise inequality we observe that the quadratic terms $(\pax \rho(x_t))^2$ are linear in $\pax^2 \rho(x_t)$. Thus we obtain
$$
\pat\pax^2\rho(x_t)\leq 10\|\rho\|_{L^\infty}(t)\pax^2\rho(x_t)\Rightarrow \|\pax^2\rho\|_{L^\infty}(t)\leq\|\pax^2\rho_0\|_{L^\infty}e^{10\int_0^t10\|\rho\|_{L^\infty}(r)dr},
$$
and we conclude the result in the one dimensional case.

Now we consider the case $d=2$. The first step is to obtain a bound on $\|\Delta \rho\|_{L^2}$. We multiply the equation by $\Delta^2\rho$ and integrate in space. The dissipative terms are negative so we neglect it. The reaction term contributes with
$$
\int_{\TT^2}\Delta\rho(x)\Delta(\rho(x)(\rho(x)-1))dx\leq c\|\Delta \rho\|^2_{L^2}\|\rho\|_{L^\infty}+\|\Delta \rho\|_{L^2}\||\nabla \rho|^2\|_{L^2},
$$
and using a classical Gagliardo-Nirenberg interpolation inequality we get
$$
\||\nabla\rho|^2\|_{L^2}\leq c\|\nabla\rho\|_{L^4}^2\leq c\|\Delta\rho\|_{L^2}\|\rho-1\|_{L^\infty}.
$$
Thus the contribution of the reaction term can be bounded as
\begin{equation}\label{lap2}
\int_{\TT^2}\Delta\rho(x)\Delta(\rho(x)(\rho(x)-1))dx\leq c\|\Delta \rho\|^2_{L^2}\|\rho\|_{L^\infty}.
\end{equation}
The contribution of the transport term is 
$$
\int_{\TT^2}\Delta^ 2\rho(x)\nabla\rho(x)\cdot\nabla U(x)dx=J_1+J_2+J_3+J_4+J_5,
$$
where
$$
J_1=\int_{\TT^2}\Delta\rho(x)\nabla U(x)\cdot \nabla\Delta\rho(x)dx\leq c\|\Delta\rho\|_{L^2}^2\|\rho\|_{L^\infty},
$$
$$
J_2=\int_{\TT^2}\Delta\rho(x)|\nabla\rho(x)|^2dx\leq c\|\Delta\rho\|_{L^2}\||\nabla\rho|^2\|_{L^2}\leq c\|\Delta\rho\|_{L^ 2}^2\|\rho\|_{L^ \infty},
$$
$$
J_3=4\int_{\TT^2}\Delta\rho(x)\partial^2_{x_2}\rho(x)\partial^2_{x_1} U(x)dx\leq c\|\Delta\rho\|_{L^ 2}^2\|\partial_{x_1}^2 U\|_{L^ \infty},
$$
$$
J_4=2\int_{\TT^2}(\Delta\rho(x))^2\partial^2_{x_1} U(x)dx\leq c\|\Delta\rho\|_{L^ 2}^2\|\partial_{x_1}^2 U\|_{L^ \infty},
$$
$$
J_5=2\int_{\TT^2}\Delta\rho(x)\partial_{x_2}^2\rho(x)(\rho(x)-1)dx\leq c\|\Delta\rho\|^2_{L^ 2}\|\rho\|_{L^ \infty},
$$
and, putting all together we obtain
$$
\frac{d}{dt}\|\Delta\rho\|_{L^2}\leq c\|\Delta\rho\|_{L^2}(\|\rho\|_{L^\infty}+\|\partial_{x_1}^2 U\|_{L^\infty}).
$$
Due to Gronwall inequality we obtain
\begin{equation}\label{eqh2}
\|\rho\|_{H^2}(t)\leq c(M)\|\rho_0\|_{H^2}\exp\left(\int_0^t\|\partial_{x_1}^2 U\|_{L^\infty}(s)+\|\partial_{x_1}\partial_{x_2} U\|_{L^\infty}(s)ds\right).
\end{equation}
We use a classical inequality (see 3.2b and Appendix 1 in \cite{kato1986well} and \cite{beale1984remarks})
$$
\|V\|_{L^\infty}\leq c(\|\zeta\|_{L^p}+\|\zeta\|_{L^\infty}\max\{1,\log(\|\zeta\|_{W^{s-1,p}}/\|\zeta\|_{L^\infty}))\},\;\;1<p<\infty,s>1+2/p
$$
where $V(\zeta)$ is a singular integral operator of Calderon-Zygmund type of $\zeta$.
In our case we have 
\begin{multline}\label{CZine}
\|\partial_{x_i}\partial_{x_j} U\|_{L^\infty}\leq c(\|\rho-1\|_{L^2}+\|\rho-1\|_{L^\infty}\max\{1,\log(\|\rho-1\|_{H^{2}}/\|\rho-1\|_{L^\infty})\})\\
\leq c(M,\|\rho_0\|_{L^2})(1+\|\rho\|_{L^\infty}\max\{1,\log(c\|\rho\|_{H^{2}})\}).\hspace{1.5cm}
\end{multline}
Inserting in \eqref{eqh2} we have
\begin{equation*}
\|\rho\|_{H^2}(t)\leq c(M)\|\rho_0\|_{H^2}\exp\left(c(M,\|\rho_0\|_{L^2})\int_0^t(1+\|\rho\|_{L^\infty}(s)\log(c\|\rho\|_{H^{2}}(s)+e)ds\right)
\end{equation*}
Now let $y(t)=c\|\rho\|_{H^2}(t)+e$. We have
\begin{equation*}
y(t)\leq c(M)y(0)\exp\left(c(M,\|\rho_0\|_{L^2})\int_0^t(1+\|\rho\|_{L^\infty}(s)\log(y(s))ds\right),
\end{equation*}
and write $z(t)=\log(y(t))$. We get
\begin{equation*}
z(t)\leq c(M)+z(0)+c(M,\|\rho_0\|_{L^2})t+\int_0^t \|\rho\|_{L^\infty}(s)z(s)ds,
\end{equation*}
and due to the integral version of the Gronwall inequality we get
\begin{equation}\label{BRT}
z(t)\leq c(M,\|\rho_0\|_{H^2})(1+t).
\end{equation}
Thus, we conclude that the bound \eqref{boundBKM} implies $\rho(t)\in H^2$. To obtain higher regularity we can do the same. Indeed, taking derivatives and multiplying by the correct terms we obtain
\begin{multline*}
\|\rho\|_{H^4}\leq \|\rho_0\|_{H^4}\exp\left(\int_0^t\|\rho\|_{L^\infty}(s)+\|\Delta \rho\|_{L^2}(s)+\|\partial^2_{x_1} U\|_{L^\infty}(s)+\|\partial_{x_1}\partial_{x_2} U\|_{L^\infty}(s)\right)ds,
\end{multline*}
where we used the following classical interpolation inequalities
$$
\|Df\|_{L^4}^2\leq c\|D^2 f\|_{L^2}\|f\|_{L^\infty},\text{ and } \|D^2f\|_{L^4}^2\leq \|D^2f\|_{L^2}\|D^2f\|_{L^\infty}.
$$
Using the previous bounds \eqref{boundBKM} \eqref{CZine} and \eqref{BRT} we conclude the result.
\end{proof}

%--------------------------------------------------------------------------
\subsection{Smoothing effect}
%--------------------------------------------------------------------------

%--------------------------------------------------------------------------
\begin{figure}[ht]\begin{center}
\includegraphics[scale=0.5]{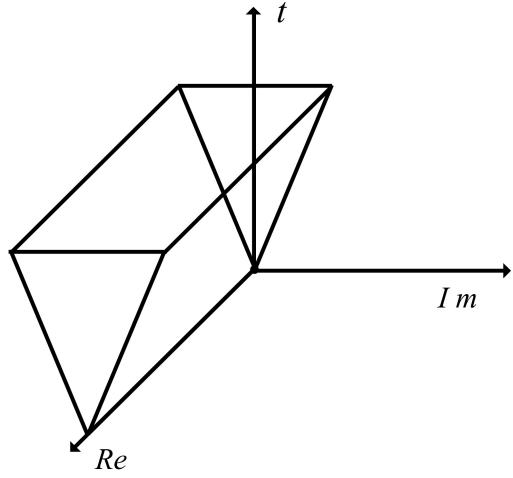}
\end{center}
\caption{The strip of analyticity for $\rho.$}
\label{complexstrip}
\end{figure}
%--------------------------------------------------------------------------

Our proof follows the steps presented in \cite{ccfgl}. The proof is based in \emph{a priori} estimates for the complex extension of the function $\rho$ on the complex strip $S=\{x+i\xi, x\in\TT, |\xi|<kt\}$, for certain constant $k>0$ (see Figure~\ref{complexstrip}). We define
$$
\|\rho\|^2_{L^2(S)}=\int_\TT |\rho(x+ikt)|^2dx+\int_\TT|\rho(x-ikt)|^2dx,\;\;\|\rho\|_{H^3(S)}^2=\|\rho\|_{L^2(S)}^2+\|\pax^3\rho\|^2_{L^2(S)}.
$$
For the complex extension the equation is
\begin{equation}\label{eq7IA}
\pat \rho(x+i\xi) = -\beta \Lambda^{\alpha}\rho(x+i\xi) + \rho(x+i\xi)(\rho(x+i\xi)-1) + \nabla\rho(x+i\xi) \cdot \nabla T(\rho)(x+i\xi)
\end{equation}

%--------------------------------------------------------------------------
\begin{teo}[Smoothing effect]\label{SE}
Let us consider equation \eqref{eq7} with $\alpha\geq1$ with $\rho_0\in H^3$ as initial data and $\TT$ as spatial domain. Then the classical solution $\rho$ (which exists, at least locally in time, because of Theorem \ref{WellP}) continues analitically into the strip $S$ for times $0<t\leq \tau(\rho_0)$.
\end{teo}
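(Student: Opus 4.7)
The plan is to push the proof of Theorem~\ref{WellP} onto the complex strip, following the scheme of \cite{ccfgl}: write the same $H^3$ energy estimate for the analytic extension of $\rho$ on each boundary of $S=\{x+i\xi:|\xi|<kt\}$, and show that, for $k=k(\beta,\|\rho_0\|_{H^3})>0$ sufficiently small, the strip norm $\|\rho\|_{H^3(S)}$ stays finite on $[0,\tau(\rho_0)]$. To make this rigorous, I would first run the computation on the regularized problem of Theorem~\ref{WellP}, whose solutions are entire in $x$ at each time, obtain $\epsilon$-uniform strip bounds, and pass to the limit; analyticity of the original solution in $S$ then follows from uniform convergence on the shifted boundaries together with Morera's theorem.

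Setting $\rho_\pm(x,t)=\rho(x\pm ikt,t)$, the chain rule and the Cauchy--Riemann identity $\pa_\xi \rho = i\pax\rho$ yield
\[
\pat\rho_\pm = \pm i k\,\pax\rho_\pm - \beta \Lambda^\alpha \rho_\pm + \rho_\pm(\rho_\pm - 1) + \pax \rho_\pm \cdot \pax T(\rho)\big|_{x\pm ikt}.
\]
The only new feature compared with the real-variable estimate is the conservative transport term $\pm ik\,\pax\rho_\pm$, reflecting the fact that the strip expands in time. Testing $\rho_\pm$ and $\pax^3\rho_\pm$ against their complex conjugates and summing over the two boundaries, the dissipation contributes $-2\beta\|\Lambda^{\alpha/2}\rho\|_{H^3(S)}^2$, while the reaction and drift terms are absorbed by the same product rules and Landau/Gagliardo--Nirenberg-type inequalities already used in \eqref{L2}--\eqref{H3} (these are unaffected by the horizontal shift $x\mapsto x\pm ikt$). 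The transport term contributes only the imaginary-part quantities
\[
\pm 2k\,\im\!\int_\TT \overline{\rho_\pm}\,\pax\rho_\pm\,dx, \qquad \pm 2k\,\im\!\int_\TT \overline{\pax^3\rho_\pm}\,\pax^4\rho_\pm\,dx,
\]
which by Cauchy--Schwarz and Parseval are bounded by $2k\,\|\Lambda^{1/2}\pax^j\rho\|_{L^2(S)}^2$ for $j=0,3$.

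The crux of the proof, and the reason for requiring $\alpha\geq 1$, is absorbing these half-derivative losses into the $\Lambda^\alpha$ dissipation. Since $|\xi|\le \delta|\xi|^\alpha+C_\delta$ whenever $\alpha\geq 1$, Young's inequality gives
\[
2k\,\|\Lambda^{1/2}\pax^j\rho\|_{L^2(S)}^2 \le \beta\,\|\Lambda^{\alpha/2}\pax^j\rho\|_{L^2(S)}^2 + C(\beta,k)\,\|\pax^j\rho\|_{L^2(S)}^2,
\]
provided $k$ is chosen small relative to $\beta$. What remains is a Riccati-type differential inequality for $\|\rho\|_{H^3(S)}$ of exactly the same shape as \eqref{energybound}, so the strip norm stays finite up to $\tau(\rho_0)$. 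Passing $\epsilon\to 0$ and invoking Morera's theorem promotes this uniform bound to analyticity of $\rho(\cdot,t)$ in $S$ for all $t\in(0,\tau(\rho_0)]$. The main difficulty, as just described, is the delicate balance between $k$ (the rate at which the strip grows) and the dissipation order $\alpha$: for $\alpha<1$ the half-derivative loss overwhelms the gain from dissipation, and the above absorption is no longer available.
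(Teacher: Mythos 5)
Your overall scheme (complex strip, $H^3(S)$ energy estimate on the two boundaries, regularize and pass to the limit) is the one the paper follows, and your treatment of the linear drift $\pm ik\,\pax\rho_\pm$ --- a half-derivative loss of size $k$, absorbed into the $\Lambda^{\alpha/2}$ dissipation for $k$ small and $\alpha\ge 1$ --- is correct and matches the paper's term $J_2$. The genuine gap is in the sentence claiming that the reaction and transport terms ``are absorbed by the same product rules \dots unaffected by the horizontal shift.'' The nonlinear transport term is \emph{not} unaffected: on the strip both $\pax^3\rho_\pm$ and $\pax U(\cdot\pm ikt)$ are complex-valued, so the top-order contribution
$$
\re\int_\TT \overline{\pax^3\rho_\pm}\;\pax^4\rho_\pm\;\pax U\,dx
$$
no longer reduces by integration by parts to $-\tfrac12\int \pax^2U\,|\pax^3\rho_\pm|^2\,dx$. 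It leaves behind cross terms of the form $\int_\TT \pax^3\re\rho\;\pax^4\im\rho\;\im\pax U\,dx$, which lose a \emph{full} derivative weighted by $\|\pax\im U\|_{L^\infty(S)}$, not a half derivative weighted by $k$. No choice of $k$ helps here, and for $\alpha=1$ the dissipation only gains half a derivative, so the Young-inequality absorption you describe is not available for this term.

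The paper closes this by exploiting that the initial datum is real: $\|\pax\im U\|_{L^\infty(S)}\le c\|\im\rho\|_{L^2(S)}$, which vanishes at $t=0$, combined with the commutator estimate $\|\Lambda^{1/2}(fg)-f\Lambda^{1/2}g\|_{L^2(S)}\le c\|\pax f\|_{L^\infty(S)}\|g\|_{L^2(S)}$ to redistribute the excess half derivative. The dangerous contribution then carries the coefficient $c\|\im\rho\|_{L^2(S)}+k-\tfrac{3\beta}{4}$ in front of $\|\Lambda^{1/2}\pax^3\rho\|_{L^2(S)}^2$, and one must propagate the smallness of $\|\im\rho\|_{L^2(S)}$ in time; the paper does this by augmenting the energy with the singular term $\bigl(\tfrac{\beta}{2}-c\|\im\rho\|_{L^2(S)}\bigr)^{-1}$, estimating $\frac{d}{dt}\|\im\rho\|_{L^2(S)}$ separately, and deriving a Riccati inequality for the combined functional $\|\rho\|_{A(S)}$. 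This mechanism --- the only genuinely new analytic ingredient beyond Theorem \ref{WellP} --- is absent from your proposal, so as written the $H^3(S)$ estimate does not close.
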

%--------------------------------------------------------------------------
\begin{proof}
For the sake of brevity we work with both terms $\pm ikt$ at the same time. Firstly we consider the critical case $\alpha=1$. We start with the $L^2(S)$ norm:
$$
\frac{1}{2}\frac{d}{dt}\int_\TT|\rho(x\pm ikt)|^2dx=\re\int_\TT\bar{\rho}(x\pm ikt)(\pat \rho(x\pm ikt)\pm ik\pax\rho(x\pm ikt))dx.
$$
Using Plancherel Theorem we have
$$
-\beta\int_\TT\bar{\rho}(x\pm ikt)\Lambda\rho(x\pm ikt)dx=-\beta\int_\TT|\Lambda^{1/2}\rho(x\pm ikt)|^2dx\leq0.
$$
The reaction term can be bounded as follows:
$$
\re\int_\TT\bar{\rho}(x\pm ikt)\rho(x\pm ikt)(\rho(x\pm ikt)-1)dx\leq(\|\rho\|_{L^\infty(S)}+1)\|\rho\|_{L^2(S)}^2.
$$ 
We have
\begin{multline*}
I_1=\re\int_\TT\bar{\rho}(x\pm ikt)\pax\rho(x\pm ikt)\pax U(x\pm ikt)dx\\
=\re\int_\TT\pax U(x\pm ikt)\frac{1}{2}\pax |\rho(x\pm ikt)|^2dx\\
+\re\; i\int_\TT\pax U(x\pm ikt)\re\rho(x\pm ikt)\pax\im\rho(x\pm ikt)dx\\
-\re\; i\int_\TT\pax U(x\pm ikt)\im\rho(x\pm ikt)\pax\re\rho(x\pm ikt)dx.
\end{multline*}
Integrating by parts we have
\begin{multline*}
I_1=\re\int_\TT\pax U(x\pm ikt)\frac{1}{2}\pax |\rho(x\pm ikt)|^2dx\\
-\re\; i\int_\TT\pax^2 U(x\pm ikt)\re\rho(x\pm ikt)\im\rho(x\pm ikt)dx\\
+ 2\int_\TT\pax \im U(x\pm ikt)\im\rho(x\pm ikt)\pax\re\rho(x\pm ikt)dx,
\end{multline*}
and we have
$$
I_1\leq c\|\rho\|_{L^2(S)}^2(\|\rho\|_{L^\infty(S)}+1)+\|\pax\im U\|_{L^\infty(S)}\|\rho\|_{L^2(S)}\|\pax \rho\|_{L^2(S)}.
$$
Due to the mean conservation and that the initial data has $\im \rho_0=0$ we have the bound
$$
\|\pax \im U\|_{L^\infty(S)}\leq c\|\pax^2\im U\|^{1/2}_{L^2(S)}\|\pax\im U\|_{L^2(S)}^{1/2}\leq c\|\im\rho\|_{L^2(S)},
$$ 
where in the last inequality we use the Poincar\'e inequality. 

The last term can be bounded as follows:
$$
\re\int_\TT\bar{\rho}(x\pm ikt)(\pm ik\pax \rho (x\pm ikt))dx\leq k\|\rho\|_{L^2(S)}\|\pax\rho\|_{L^2(S)}.
$$
Putting all together and using Sobolev embedding we have 
\begin{equation}\label{L2IA}
\frac{d}{dt}\|\rho\|_{L^2(S)}^2\leq c\|\rho\|_{H^3(S)}^2(1+\|\rho\|_{H^3(S)}).
\end{equation}
We have
$$
\frac{1}{2}\frac{d}{dt}\int_\TT|\pax^3\rho(x\pm ikt)|^2dx=\re\int_\TT\pax^3\bar{\rho}(x\pm ikt)(\pat \pax^3\rho(x\pm ikt)\pm ik\pax^4\rho(x\pm ikt))dx.
$$
Following the previous techniques we have 
\begin{multline*}
\frac{1}{2}\frac{d}{dt}\int_\TT|\pax^3\rho(x\pm ikt)|dx\leq c\|\rho\|_{H^3(S)}^2(\|\rho\|_{H^3(S)}+1)-\beta\int_\TT|\Lambda^{1/2}\pax^3\rho|^2dx\\
+\re\int_\TT\pax^3\bar{\rho}\pax^4\rho\pax Udx \pm k\re \;i\int_\TT\pax^3\bar{\rho}\pax^4\rho dx.
\end{multline*}
We write
$$
J_1=\re\int_\TT\pax^3\bar{\rho}\pax^4\rho\pax Udx\;\;\text{ and } J_2=\pm k\re \;i\int_\TT\pax^3\bar{\rho}\pax^4\rho dx.
$$
Splitting into real and imaginary part we have
\begin{multline*}
J_1=\re\int_\TT\pax U\frac{1}{2}\pax|\pax^3\rho|^2dx+\int_\TT\pax^3\re\rho\pax^4\im\rho\im\pax Udx-\int_\TT\pax^3\im\rho\pax^4\re\rho\im\pax Udx\\
\leq c\|\pax^3\rho\|_{L^2(S)}(\|\rho\|_{L^\infty(S)}+1)+2 K_1,
\end{multline*}
where the last integral is
\begin{multline*}
K_1=\bigg{|}\int_\TT\pax^3\re\rho\Lambda H\pax^3\im\rho\im\pax Udx\bigg{|}\\
=\bigg{|}\int_\TT\Lambda^{1/2} H\pax^3\im\rho\Lambda^{1/2}(\pax^3\re\rho\im\pax U)dx\bigg{|}\\
\leq\|\Lambda^{1/2} \pax^3\rho\|_{L^2(S)}\|\Lambda^{1/2}(\pax^3\re\rho\im\pax U)\|_{L^2(S)}.
\end{multline*}
We use the commutator estimate 
$$
\|\Lambda^{1/2}(fg)-f\Lambda^{1/2}g\|_{L^2(S)}\leq c\|\pax f\|_{L^\infty(S)}\|g\|_{L^2(S)}
$$
to conclude that
\begin{multline*}
K_1\leq\|\Lambda^{1/2} \pax^3\rho\|_{L^2(S)}(\|\pax^2\im U\|_{L^\infty(S)}\|\pax^3\rho\|_{L^2(S)}+\|\pax\im U\|_{L^\infty(S)}\|\Lambda^{1/2}\pax^3\rho\|_{L^2(S)})\\
\leq c(\|\Lambda^{1/2} \pax^3\rho\|_{L^2(S)}\|\rho\|_{L^\infty(S)}\|\pax^3\rho\|_{L^2(S)}+\|\im\rho\|_{L^2(S)}\|\Lambda^{1/2} \pax^3\rho\|_{L^2(S)}^2). 
\end{multline*}
If we use Young inequality we have
$$
K_1\leq \left(\frac{\epsilon}{2}+c\|\im\rho\|_{L^2(S)}\right)\|\Lambda^{1/2} \pax^3\rho\|_{L^2(S)}^2+\frac{1}{2\epsilon}\|\rho\|^2_{L^\infty(S)}\|\pax^3\rho\|^2_{L^2(S)}.
$$
We can also bound
$$
J_2\leq k\|\Lambda^{1/2}\pax^3\rho\|_{L^2(S)}.
$$
Finally, taking $\epsilon=\beta/2$
\begin{multline}\label{H3IA}
\frac{d}{dt}\int_\TT|\pax^3\rho(x\pm ikt)|dx\leq c\|\rho\|_{H^3(S)}^2(\|\rho\|_{H^3(S)}+1)\\
+\left(c\|\im\rho\|_{L^2(S)}+k-\frac{3\beta}{4}\right)\|\Lambda^{1/2}\pax^3\rho\|_{L^2(S)},
\end{multline}
and this is a correct bound if the term
$$
K_2(t)=c\|\im\rho\|_{L^2(S)}+k-\frac{3\beta}{4}\leq0.
$$
Initially $K_2(0)=k-\frac{3\beta}{4},$ so taking $k=\beta/4<\beta$ the condition $K_2<0$ will be satisfied for a (maybe short) time if we can bound the evolution of $\|\im\rho\|_{L^2(S)}$. 

We define the \emph{`energy'}
$$
\|\rho\|_{A(S)}=\|\rho\|_{H^3(S)}+\frac{1}{\frac{\beta}{2}-c\|\im\rho\|_{L^2(S)}}.
$$
The evolution of this energy is
$$
\frac{d}{dt}\|\rho\|_{A(S)}\leq c((\|\rho\|_{A(S)}+1)^2+\|\rho\|_{A(S)}^2\frac{d}{dt}\|\im\rho\|_{L^2(S)}).
$$
At this point it is easy to obtain
$$
\frac{d}{dt}\|\im\rho\|_{L^2(S)}\leq c(\|\rho\|_{H^3(S)}+1)\|\im\rho\|_{L^2(S)}^2,
$$
so 
$$
\frac{d}{dt}\|\rho\|_{A(S)}\leq (\|\rho\|_{A(S)}+1)^5
$$
and we conclude that there is a time $\tau(\rho)$ so that $\|\rho\|_{A(S)}<\infty.$

Now we approximate the problem in the real line as in the proof of the Theorem \ref{WellP}, using the rescaled heat kernel as a mollifier:
$$
\pat \rho^\epsilon=-\beta\jeps\Lambda^\alpha\jeps \rho^\epsilon+\jeps(\jeps\rho^\epsilon(\jeps\rho^\epsilon-1))+\jeps(\nabla\jeps\rho^\epsilon\cdot\nabla\jeps T(\rho^\epsilon)),
$$
where
$$\Delta T(\rho^\epsilon)= (\jeps\rho^\epsilon-1),$$
and we use $\rho_0^\epsilon=\jeps\rho_0$ as initial data.
Using Picard's Theorem in $H^3(\TT)$ we obtain solutions $\rho^\epsilon$ up to time $\tau^\epsilon$ which are analytic. Due to the above estimates we have a time a common time of existence $\tau$ depending on the initial data $\rho_0$ but not in $\epsilon$ and if $t<\tau(\rho_0)$, $\|\rho^\epsilon\|_{H^3(S)}<\infty$. Now we pass to the limit in the Hardy-Sobolev space $H^3(S)$ obtaining $\rho$ and we use the fact that if the energy $\|\cdot\|_{A(S)}$ remains bounded then this implies the analyticity. Due to the uniqueness, this is the same $\rho$ as in Theorem \ref{WellP}.

In the proof for the case $\alpha>1$ we use the inequality
$$
\|\Lambda^{1/2}T\|^2_{L^2(S)}\leq \|T\|_{L^2(S)}^2+\|\Lambda^{\alpha/2}T\|_{L^2(S)}^2
$$
obtained by splitting in the Fourier side.
We conclude the proof in the supercritical case following the same steps.
\end{proof}

%--------------------------------------------------------------------------
%\subsection{Backward ill-posedness in Sobolev spaces}
%--------------------------------------------------------------------------

Using the smoothing effect, one can prove ill-posedness in Sobolev spaces when the problem is considered backward in time.
We remark that this result does not require global existence of solutions. 

%--------------------------------------------------------------------------
\begin{prop}[Ill-posedness]
There are solutions $\tilde{\rho}$ to the backward in time equation \eqref{eq7} with $\beta>0$, such that $\|\tilde{\rho}\|_{H^k}(0)<\epsilon$ and $\|\tilde{\rho}\|_{H^k}(\delta)=\infty$ for all $0<\epsilon$, sufficiently small $0<\delta$, and $k>3$.
\end{prop}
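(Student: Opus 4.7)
The strategy combines the instant analyticity of Theorem~\ref{SE} with a time-reversal argument. The observation is that if a classical backward trajectory in $H^k$ existed on $[0,\delta]$ starting from $\rho_0$, then reversing time would produce a forward classical solution of \eqref{eq7} whose value at time $\delta$ is $\rho_0$; Theorem~\ref{SE} would then force $\rho_0$ to extend analytically to a complex strip of positive width. Choosing $\rho_0$ small in $H^k$ but manifestly non-analytic immediately produces a contradiction.

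Concretely, fix $\epsilon>0$ and $k>3$ and pick
$$ \rho_0(x) \;=\; \epsilon_0 \sum_{n\ge 1}\frac{\cos(nx)}{n^{k+1}}, $$
with $\epsilon_0$ tuned so that $\|\rho_0\|_{H^k}<\epsilon$; its Fourier coefficients decay only polynomially, so $\rho_0$ admits no analytic extension to any strip $\{|\im z|<\eta\}$ with $\eta>0$, and in particular $\rho_0\notin H^k(S)$ in the Hardy-Sobolev sense of Theorem~\ref{SE}. Suppose for contradiction that $\tilde{\rho}\in C([0,\delta],H^k(\TT))$ solves the backward version of \eqref{eq7} classically with $\tilde{\rho}(0)=\rho_0$ and $\tilde{\rho}(\delta)\in H^k(\TT)$. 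Define $\rho(s):=\tilde{\rho}(\delta-s)$: by construction $\rho$ is a classical forward solution of \eqref{eq7} on $[0,\delta]$ with initial datum $\rho(0)=\tilde{\rho}(\delta)\in H^k$. Theorem~\ref{SE} supplies $k_0>0$ and $\tau(\rho(0))>0$ such that $\rho(s)$ extends analytically into $\{x+i\xi:|\xi|<k_0 s\}$ for every $s\in(0,\tau(\rho(0))]$. Taking $\delta\le\tau(\rho(0))$ (this is the role of the ``sufficiently small $\delta$'' in the statement), the identity $\rho(\delta)=\rho_0$ forces $\rho_0$ to be the restriction to $\TT$ of an analytic function on a strip of positive width, contradicting the choice above. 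Hence no such $\tilde{\rho}$ can remain in $H^k$ on all of $[0,\delta]$, so its $H^k$ norm must diverge before time $\delta$.

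To actually realize the solution $\tilde{\rho}$ exhibited in the statement, I would approximate $\rho_0$ by its Fourier truncations $\rho_0^{(N)}$, which are entire functions; for entire data the backward equation admits a local classical solution $\tilde{\rho}^{(N)}$, constructed by running the forward flow in reverse on a short interval via a Cauchy-Kovalevskaya-type scheme in a shrinking strip of analyticity. Passing to the limit $N\to\infty$ and invoking the contradiction above shows that the limit cannot stay in $H^k$ past some $\delta=\delta(\rho_0)>0$, which is the desired blow-up time. The main obstacle I anticipate is precisely this final construction: because the backward problem is genuinely ill-posed, the existence intervals for $\tilde{\rho}^{(N)}$ shrink with the analyticity radius of $\rho_0^{(N)}$, so turning them into a bona fide limit solution requires uniform estimates in a Hardy-Sobolev space analogous to the one used in the proof of Theorem~\ref{SE}, together with a careful compactness argument. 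The contradiction step itself, by contrast, is essentially immediate from instant analyticity.
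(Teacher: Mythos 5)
There is a genuine gap. The proposition asserts the \emph{existence} of an actual solution $\tilde{\rho}$ of the backward equation with $\|\tilde{\rho}\|_{H^k}(0)<\epsilon$ and $\|\tilde{\rho}\|_{H^k}(\delta)=\infty$, whereas your contradiction argument only shows that no backward solution of class $C([0,\delta],H^k)$ can emanate from your non-analytic datum $\rho_0$. From this you cannot conclude that ``its $H^k$ norm must diverge before time $\delta$'': the natural alternative, which is in fact what happens, is that no backward solution from $\rho_0$ exists at all (by the smoothing effect, any state in the range of the forward flow at a positive time is analytic in a strip, so a datum with only polynomially decaying Fourier coefficients cannot be flowed backward even for an instant). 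You correctly sense this and propose to manufacture the missing solution by truncating $\rho_0$ in Fourier and running a Cauchy--Kovalevskaya scheme backward in a shrinking strip, but that is precisely the part you leave unproved: the existence intervals for the truncations degenerate as $N\to\infty$ and no uniform estimate is offered, so no limit solution is actually produced. As written, the argument establishes a (different, weaker) non-existence form of ill-posedness, not the stated norm-inflation form.

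The paper's proof avoids this difficulty entirely by placing the rough datum at the \emph{final} time of the backward trajectory rather than at the initial one. Take $g_0\in H^3(\TT)\setminus H^4(\TT)$ and run the \emph{forward} flow from $\lambda g_0$, $0<\lambda<1$, which exists by Theorem \ref{WellP} (for $d=1$, $H^3$ regularity suffices); by Theorem \ref{SE} the solution $\rho^\lambda$ is analytic in a strip of width proportional to $t$ up to a time $\delta^*(g_0)$ uniform in $\lambda$. Then $\tilde{\rho}^{\lambda,\delta}(x,t)=\rho^\lambda(x,\delta-t)$ is by construction an exact solution of the backward equation: at $t=\delta$ it equals $\lambda g_0\notin H^4$, so its $H^4$ norm is infinite there, while at $t=0$ it equals the analytic state $\rho^\lambda(\cdot,\delta)$, whose $H^4$ norm is bounded by $c(\beta,g_0)\lambda/\delta$ via the Cauchy integral formula on the strip of analyticity; choosing $\lambda<\min\{1,\epsilon\delta/c(\beta,g_0)\}$ makes it smaller than $\epsilon$. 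The whole construction thus reduces to the forward well-posedness and instant-analyticity results already proved, and no backward solvability from non-analytic data is ever required. Your mechanism (instant analyticity plus time reversal) is the right one, but it must be run from the rough data forward, not from the rough data backward.
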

%--------------------------------------------------------------------------
\begin{proof}
Let us fix $k=4$, the other cases being similar.
Take $g_0(x)\in H^ 3(\TT)$ but $g_0 \notin H^4(\TT)$ and consider the solution (forward in time) $\rho^{\lambda}$ to the equation \eqref{eq7} with initial data $\rho(x,0)=\lambda g_0(x)$ where $0<\lambda<1$. We have (see Theorem \ref{SE}) that there is a uniform (in $\lambda$) time $\delta^ *(g_0)$ such that $\rho^ \lambda$ exists and is analytic up to time $\delta^* (g_0)$. Now define $\tilde{\rho}^ {\lambda,\delta}(x,t)=\rho^ \lambda(x,-t+\delta)$ for fixed $0<\delta<\delta^*(g_0)$. We have that $\|\tilde{\rho}^ {\lambda,\delta}\|_{H^4}(\delta)=\lambda\|g_0\|_{H^4}=\infty.$ Now, using Theorem \ref{SE} we have that there is a growing strip of analitycity for $\rho^ \lambda$ where we can apply Cauchy's integral formula in order to obtain
$$
\|\pax^ 4 \tilde{\rho}^{\lambda,\delta}\|_{L^2(\TT)}(0)=\|\pax^ 4 \rho^ \lambda\|_{L^2(\TT)}(\delta)\leq \frac{C(\beta)}{\delta^*(g_0)}\|\pax^ 3 \rho^ \lambda\|_{L^2(S_{\delta^*})}\leq \frac{c(\beta,g_0)\lambda}{\delta}.
$$
Taking $0<\lambda<\min\{1,\epsilon\delta/c(\beta,g_0)\}$ we conclude the proof.
\end{proof}

%--------------------------------------------------------------------------
\section{Gravitational stability}
\label{sec_InfNorm}
%--------------------------------------------------------------------------

The blow-up of the solutions in finite time is of particular interest from the physical point of view, since it determines whether a gravitationally bound structure may form or not.
In order to investigate the necessary conditions for star formation, the present section is devoted to the evolution of $\rhomax(t)$.
More precisely, it is our goal to determine whether/when the growth of singularities can be avoided in our non-local adhesion model.
We will focus on the critical case ($\alpha=1$) and remark that the resulting equation is more dissipative if $\alpha>1$.
All our results are expressed in units such as $\rh=1$; the conversion to other systems can be trivially carried out by substituting every occurrence of $\rho$ by $\frac{\rho}{\rh}$ or, alternatively, $\beta$ by $\beta\rh$.

%--------------------------------------------------------------------------
\subsection{Minimum time for star formation}
%--------------------------------------------------------------------------

First of all, let us address the minimum time required for the gravitational collapse of a protostar.
For those cases where a star is formed (i.e. $\rho(x_*,t_*)\to\infty$, see e.g. Figure~\ref{blo}), we provide a lower bound for the value of $t_*$ as a function of the initial condition.
Note that, by virtue of Proposition \ref{SingF}, classical solution is guaranteed up to a time $t_*$.

%--------------------------------------------------------------------------
\begin{prop}[Minimum time for blow-up]\label{StarF}
Let $\rho\geq0$ be a smooth solution of \eqref{eq7} in $\TT^d$, $d\leq3$. Then we have
$$
\frac{d}{dt}\rhomax(t)\leq \rhomax(t)(\rhomax(t)-1), 
$$
thus
$$
\rhomax(t)\leq \frac{\rhomm}{\rhomm+(1-\rhomm)e^{t}}.
$$
As a consequence the minimum time for the formation of a star is
$$
t_* = \log\left(\frac{\rhomm}{\rhomm-1}\right).
$$
\end{prop}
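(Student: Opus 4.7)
The plan is to evaluate the evolution equation pointwise at the location of the running maximum of $\rho(\cdot,t)$, extract an ordinary differential inequality for $\rhomax(t)$, and then integrate it explicitly.

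First I would invoke Rademacher's theorem exactly as in the proof of Proposition \ref{SingF}, so that for almost every $t$ there exists a point $x_t\in\TT^d$ with $\rho(x_t,t)=\rhomax(t)$ and $\frac{d}{dt}\rhomax(t)=\pat\rho(x_t,t)$. Evaluating equation \eqref{eq7} at $x_t$ uses three pointwise properties of the maximum:
\begin{itemize}
\item $\nabla\rho(x_t,t)=0$, which kills the transport contribution $\nabla\rho\cdot\nabla T(\rho)$ entirely;
\item $\Lambda^{\alpha}\rho(x_t,t)\geq 0$, since the fractional Laplacian of order $\alpha\in(0,2]$ is non-negative at a global maximum (this is immediate from the singular-integral representation when $0<\alpha<2$, and is just the usual Laplacian sign for $\alpha=2$);
\item the reaction term contributes exactly $\rhomax(t)(\rhomax(t)-1)$.
\end{itemize}
Putting these together with $\beta\geq 0$ yields
$$\frac{d}{dt}\rhomax(t)\;=\;-\beta\,\Lambda^{\alpha}\rho(x_t,t)+\rhomax(t)(\rhomax(t)-1)\;\leq\;\rhomax(t)(\rhomax(t)-1),$$
which is the first claimed inequality.

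The remaining task is to solve the autonomous ODE comparison. I would compare $\rhomax$ to the solution $y(t)$ of $y'=y(y-1)$, $y(0)=\rhomm$, by a standard Gronwall-type argument. The ODE separates as $\frac{dy}{y(y-1)}=dt$; partial fractions give $\log\bigl|\tfrac{y-1}{y}\bigr|=t+C$, and imposing $y(0)=\rhomm$ leads after rearrangement to
$$y(t)\;=\;\frac{\rhomm}{\rhomm+(1-\rhomm)e^{t}},$$
which is the stated upper bound. The denominator vanishes precisely when $e^{t}=\tfrac{\rhomm}{\rhomm-1}$, yielding $t_{*}=\log\!\bigl(\tfrac{\rhomm}{\rhomm-1}\bigr)$ (well-defined and positive exactly when $\rhomm>1$, consistent with the Jeans-type threshold discussed earlier); since $\rhomax(t)$ is dominated by $y(t)$ on any interval of existence, $\rhomax$ cannot diverge before $t_{*}$.

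The only non-routine point is justifying the sign $\Lambda^{\alpha}\rho(x_t,t)\geq 0$; this is clear for $\alpha=2$ and for $\alpha\in(0,2)$ follows from the integral formula $\Lambda^{\alpha}\rho(x_t)=c_{d,\alpha}\,\mathrm{p.v.}\int(\rho(x_t)-\rho(y))|x_t-y|^{-d-\alpha}dy$, whose integrand is pointwise non-negative at a maximum. Everything else is an explicit ODE integration, so I do not anticipate a real obstacle.
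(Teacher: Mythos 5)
Your proposal is correct and follows essentially the same route as the paper's proof: evaluate the equation at the point where $\rho$ attains its maximum (via Rademacher's theorem), use the integral representation of $\Lambda^{\alpha}$ to discard the nonnegative dissipative term, note that the transport term vanishes there, and integrate the resulting Riccati-type inequality. The only difference is that you spell out the ODE integration that the paper leaves as ``straightforward.''
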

%--------------------------------------------------------------------------
\begin{proof}
Due to the smoothness of $\rho$ and using Rademacher Theorem, we have that the evolution of
$$
\rhomax(t)=\max_{x} \rho(x,t)=\rho(x_t),
$$ 
is given by
$$
\frac{d}{dt}\rho(x_t)\leq -\beta\Lambda^\alpha \rho(x_t)+\rho(x_t)(\rho(x_t)-1)\leq \rho(x_t)(\rho(x_t)-1),
$$
where the last inequality is due to the integral representation for the diffusive operator. Now the result is straightforward.
\end{proof}

%--------------------------------------------------------------------------
\subsection{Exponential suppression of density fluctuations}
%--------------------------------------------------------------------------

Our main result, and the one that bears more physical relevance, is that equation \eqref{eq7} with $\alpha=1$ does not have a blow-up for $\rhomax(t)$ -- i.e. no star can ever form -- if $\beta$ is big enough.
Physically, $\beta$ is the dimensionless ratio of the free-fall time, representative of the strength of the gravitational forces, and the sound-crossing time, representative of the restoring pressure force.
Depending on the amplitude of the initial conditions (more precisely, the value of \rhomm), there is a critical value of the sound speed above which the pressure gradient is able to overcome gravity.
Or, in other words, for a uniform gas cloud at a given temperature (i.e. $\beta$), sufficiently small perturbations cannot grow.
Mathematically, this result is a Maximum Principle for $\rhomax$ for a special class of initial data, and the description in terms of $\beta$ or $\Vert\rho_0\Vert_{L^1}$ merely reflects the choice of units.
Both choices are, of course, absolutely equivalent.

Consistently with the results of numerical simulations, we find that, for sufficiently small perturbations and/or large pressure forces, there is a depletion of the solution (see Figure~\ref{dec}), and the system evolves towards the homogeneous equilibrium state $\rho(x)=1$.
Since $\rhomax$ is bounded, global existence of classical solution follows from Proposition~\ref{SingF}.

%--------------------------------------------------------------------------
\begin{figure}[ht]
\includegraphics[scale=0.45]{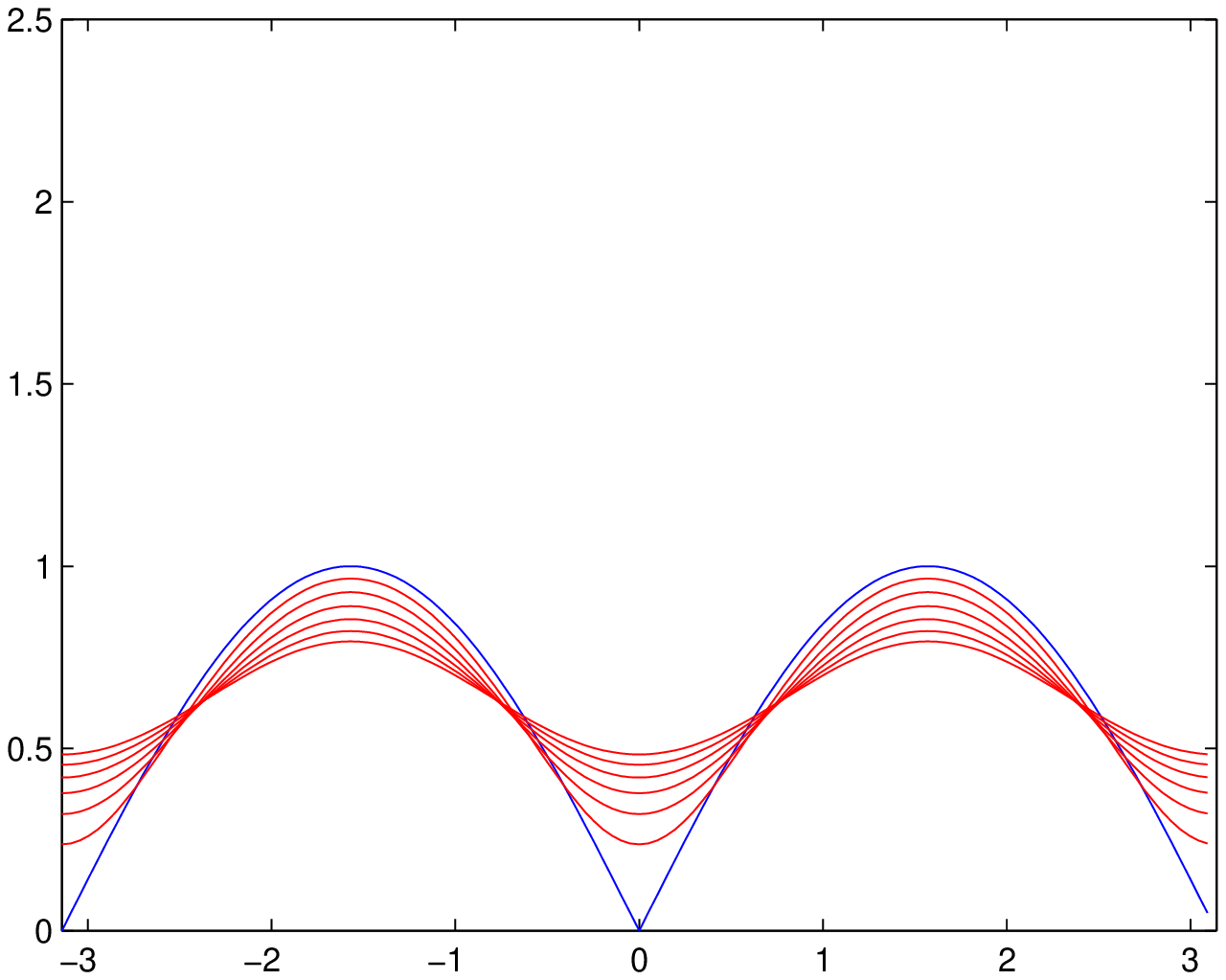} 
\includegraphics[scale=0.45]{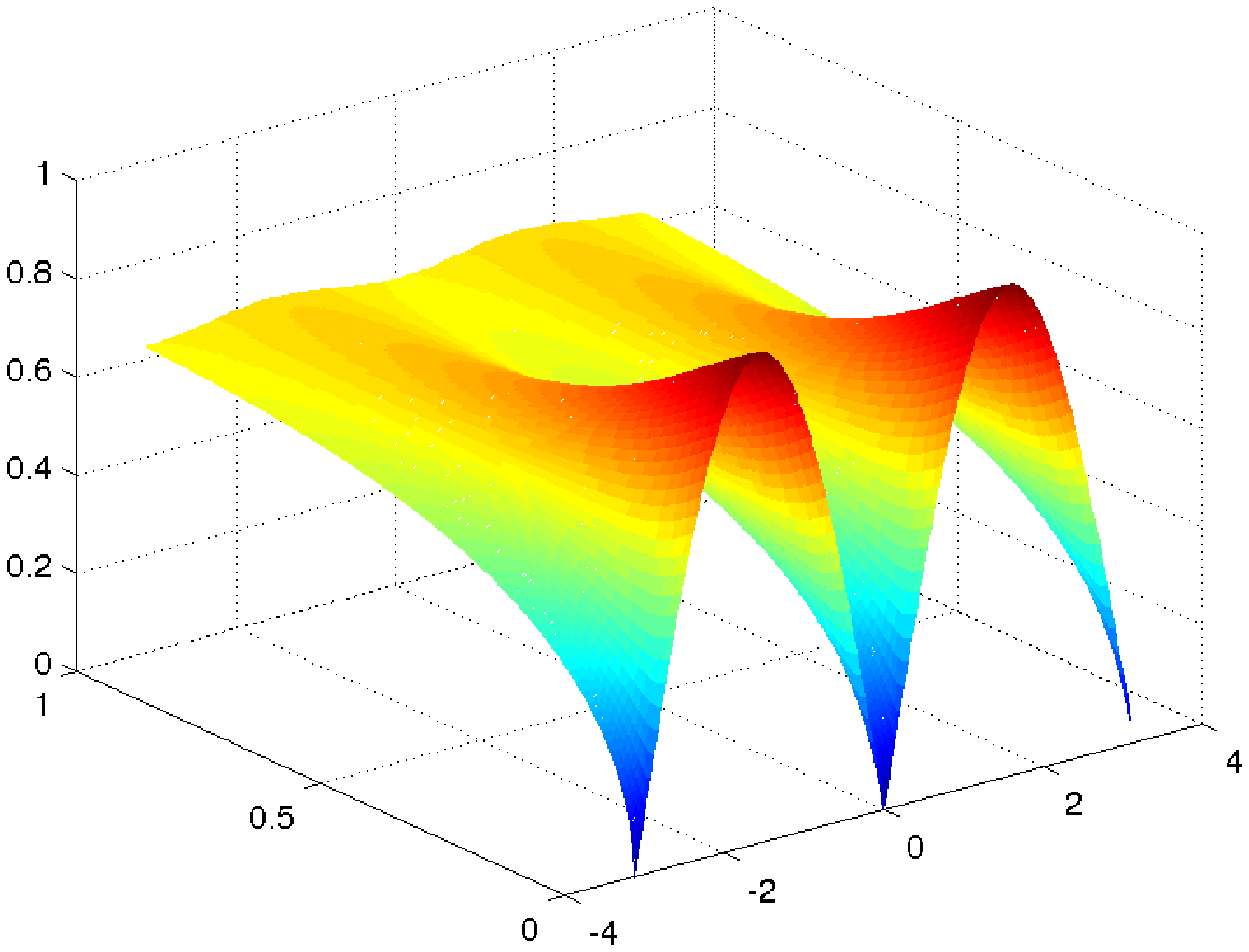}
\caption{a) The initial data (in blue) and the evolution (in red) for the one-dimensional case of equation \eqref{eq7} with $\beta=1$ and b) the complete evolution.}
\label{dec}
\end{figure}

\begin{figure}[ht]
\includegraphics[scale=0.45]{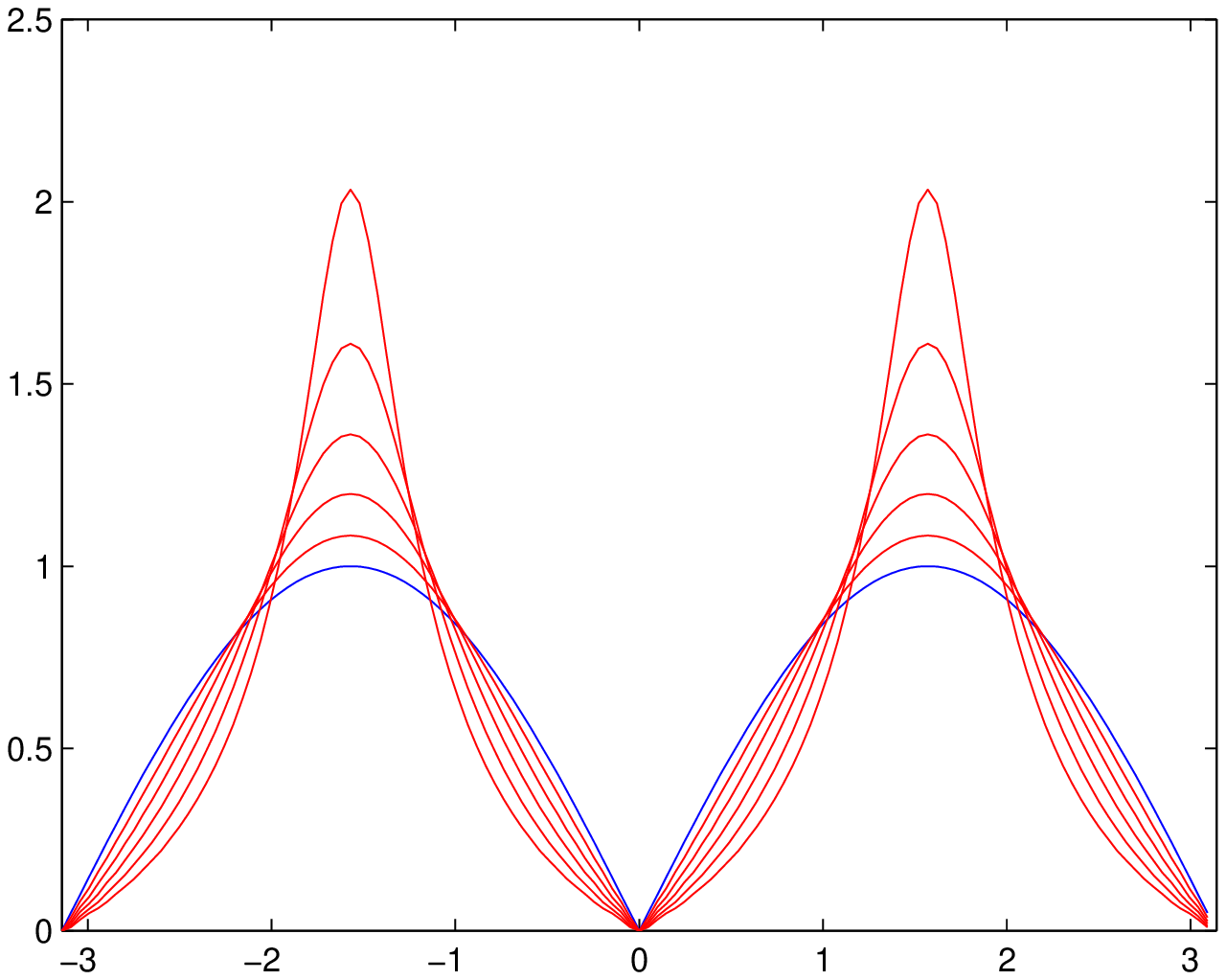}
\includegraphics[scale=0.45]{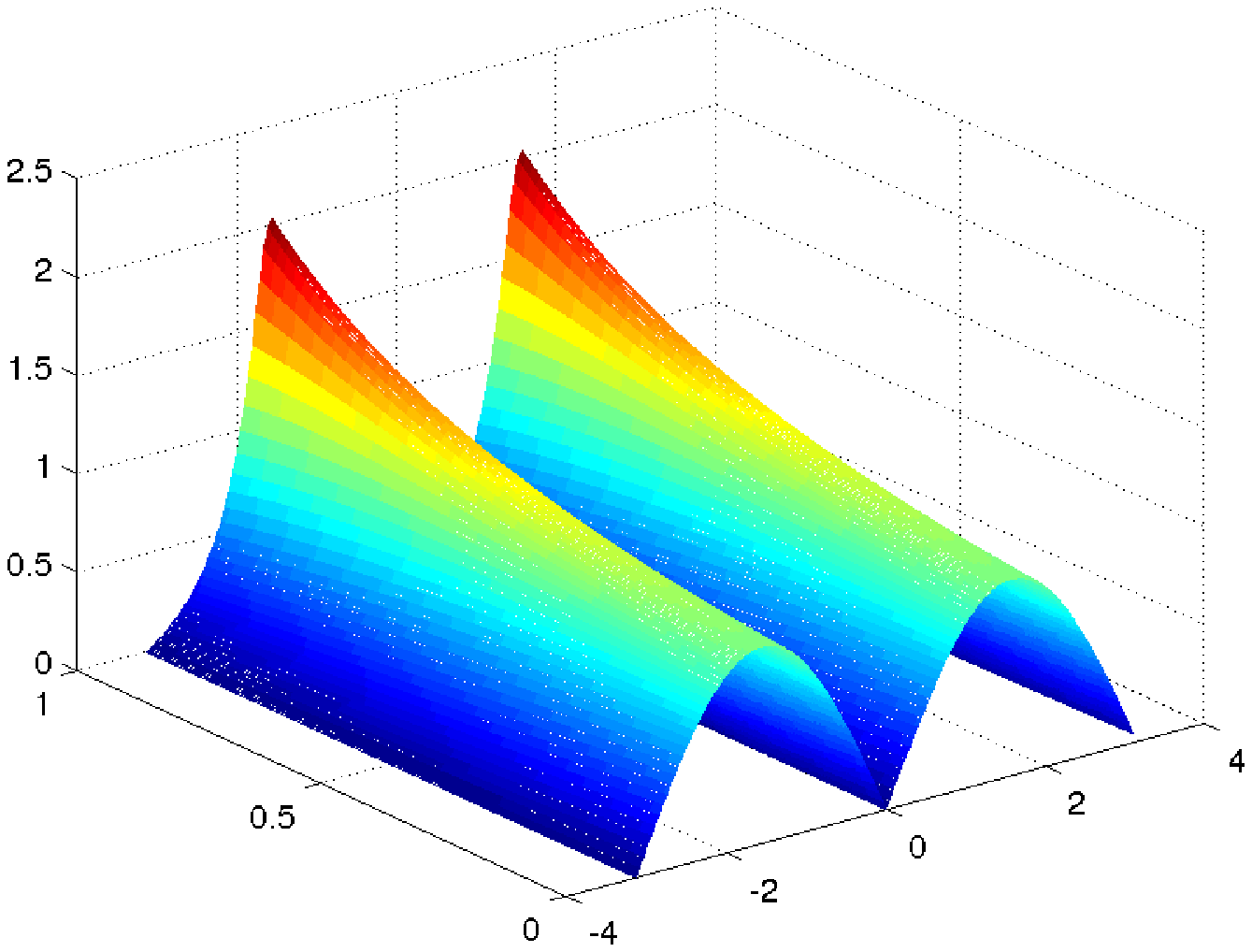} 	
\caption{a)The initial data (in blue) and the evolution (in red) for the one-dimensional case of equation \eqref{eq7} with $\beta=0$ and b) the total evolution.}
\label{blo}
\end{figure}
%--------------------------------------------------------------------------

%--------------------------------------------------------------------------
\begin{teo}[Decay of small perturbations]
\label{Max1_d2}
For \eqref{eq7} with $0<\alpha<2$, spatial domain $\TT^d$, $d=1,2,3$, and initial data $\rho_0$, if
$$\rhomm \leq c_{\alpha,d}\beta,$$
where $c_{\alpha,d}$ is an explicit constant, then the maximum density decays exponentially,
$$
\rhomax(t) ~\leq~ 1 \,+\, \left[\, \rhomm -1 \,\right]\ e^{- \left[\,c_{\alpha,d}\beta - \rhomm \right]\, t}. 
$$
Moreover, we have that $c_{1,1}=1$.
\end{teo}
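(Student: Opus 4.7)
The approach mirrors that of Proposition~\ref{StarF}, but this time we cannot afford to discard the dissipative contribution. Let $x_t$ denote a point at which $\rho(\cdot,t)$ attains its maximum. Since $\nabla\rho(x_t)=0$, the drift term in \eqref{eq7} vanishes at $x_t$, and a Rademacher-type argument (applied rigorously to the regularised solutions built in Theorem~\ref{WellP}, then passing to the limit) yields
$$
\frac{d}{dt}\rhomax(t)\;\leq\;-\beta\,\Lambda^{\alpha}\rho(x_t)\,+\,\rhomax(t)\bigl(\rhomax(t)-1\bigr).
$$

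The main step is to bound $\Lambda^{\alpha}\rho(x_t)$ from below in terms of $\rhomax(t)-\rh=\rhomax(t)-1$. Writing the fractional Laplacian on $\TT^d$ as a singular integral against the periodised kernel $\mathcal{K}$,
$$
\Lambda^{\alpha}\rho(x_t)\;=\;c_{d,\alpha}\,\mathrm{p.v.}\!\int_{\TT^d}\bigl(\rho(x_t)-\rho(y)\bigr)\,\mathcal{K}(x_t-y)\,dy,
$$
the integrand is pointwise non-negative because $x_t$ is a maximum. Replacing $\mathcal{K}(x_t-y)$ by its infimum over $\TT^d$ and using mass conservation,
$$
\int_{\TT^d}\bigl(\rho(x_t)-\rho(y)\bigr)\,dy\;=\;|\TT^d|\bigl(\rhomax(t)-\rh\bigr),
$$
one obtains the pointwise bound
$$
\Lambda^{\alpha}\rho(x_t)\;\geq\;c_{\alpha,d}\bigl(\rhomax(t)-1\bigr),
$$
with $c_{\alpha,d}>0$ explicit. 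A direct evaluation of the periodised one-dimensional kernel in the critical case furnishes $c_{1,1}=1$.

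Plugging this into the ODE yields
$$
\frac{d}{dt}\rhomax(t)\;\leq\;\bigl(\rhomax(t)-1\bigr)\bigl(\rhomax(t)-c_{\alpha,d}\beta\bigr).
$$
Since $\rho_0\geq 0$ and $\rh=1$ force $\rhomm\geq 1$, the hypothesis $\rhomm\leq c_{\alpha,d}\beta$ makes the right-hand side non-positive at $t=0$; a continuity/bootstrap argument shows that $\rhomax$ is non-increasing for all time, so $\rhomax(t)\leq\rhomm$ persists and
$$
\frac{d}{dt}\bigl(\rhomax(t)-1\bigr)\;\leq\;-\bigl(c_{\alpha,d}\beta-\rhomm\bigr)\bigl(\rhomax(t)-1\bigr).
$$
Integrating this scalar linear inequality produces precisely the claimed exponential decay toward the homogeneous state $\rho\equiv 1$; global existence then follows as a corollary by Proposition~\ref{SingF}.

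\textbf{Main obstacle.} The crux is extracting the sharp, explicit value of $c_{\alpha,d}$ from the periodised singular kernel of $\Lambda^{\alpha}$ on $\TT^d$; in particular, matching $c_{1,1}=1$ demands a careful evaluation of the principal-value integral on the torus rather than the cruder bound coming from using the diameter. Once this pointwise lower bound is established, the remainder of the argument is a routine ODE comparison in the same spirit as Proposition~\ref{StarF}.
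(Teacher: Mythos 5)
Your proposal follows essentially the same route as the paper: evaluate the equation at the maximum point (the drift vanishing there), bound the periodised singular integral from below by replacing the kernel with its infimum over the fundamental domain so that mass conservation gives $\Lambda^\alpha\rho(x_t)\ge c_{\alpha,d}(\rhomax-1)$ (with $c_{1,1}=1$ coming from the explicit kernel $\frac{1}{2\pi}\sin^{-2}\bigl(\frac{x_t-y}{2}\bigr)\ge\frac{1}{2\pi}$), and then close the scalar differential inequality. The argument is correct and matches the paper's proof in all essentials.
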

%--------------------------------------------------------------------------

\begin{proof}
First, we prove the case $\alpha=1,$ $d=1$. Denoting the point where $\rho$ achieves its maximum value by $x_t$ and using the integral representation of the fractional Laplacian, one can easily derive the bound
$$
\Lambda\rho(x_t)
=
\frac{1}{2\pi}\text{P.V.}\int_\TT\frac{\rho(x_t)-\rho(y)}{\sin^{2}\left(\frac{x_t-y}{2}\right)}dy
~\geq~
\frac{1}{2\pi} \int_\TT \rho(x_t)-\rho(y)\, dy = \rho(x_t)-1
$$
Thus
$$
\pat \rho(x_t)
\,\leq\,
-\beta \left[\, \rho(x_t) -1 \,\right] +  \rho(x_t) \left[\, \rho(x_t) -1 \,\right]
\,=\,
\left[\, \rho(x_t) -1 \,\right] \left[\, \rho(x_t) - \beta \,\right],
$$
and the maximum density will always decrease if $\rho(x_t) < \beta$.
In fact,
$$
\pat \left[\, \rho(x_t) -1 \,\right]
\,\leq\,
- \left[\, \beta - \rhomm \right] \left[\, \rho(x_t) -1 \,\right].
$$
Now we proceed with the $d-$dimensional case.
From the integral representation of the fractional Laplacian in $\TT^d$
$$
\Lambda^\alpha\rho(x_t)=C_{\alpha,d}\sum_{\nu\in\mathbb{Z}^d}\text{P.V.}\int_{\TT^d}\frac{\rho(x_t)-\rho(y)}{|x_t-y-2\pi\nu|^{d+\alpha}}dy
$$
we have that
$$
\Lambda^\alpha\rho\geq C_{\alpha,d}\ \text{P.V.}\int_{\TT^d}\frac{\rho(x_t)-\rho(y)}{|x_t-y|^{d+\alpha}}dy.
$$
Since $|x_t-y| \leq \pi \sqrt{d}$,
$$
\Lambda^\alpha\rho \geq c_{\alpha,d} \left[ \rho(x_t)-1 \right].
$$
and we can conclude the result analogously to the one-dimensional case.
\end{proof}

In one spatial dimension, it is also possible to prove that, for sufficiently large $\beta$ (or, equivalently, sufficiently small $\Vert\rho_0\Vert_{L^1}$), \emph{all} density fluctuations are exponentially suppressed, regardless of their initial amplitude.
%--------------------------------------------------------------------------
\begin{teo}[Global stability for $d=1$]
\label{Max}
Given equation \eqref{eq7} with $\alpha=1$, spatial domain $\TT$, and initial data $\rho_0$, if $$\beta \geq 4\pi^2,$$ then the maximum density decays exponentially,
$$
\rhomax(t) ~<~ 1 \,+\, \left[\, \rhomm -1 \,\right]\ e^{-t}.
$$
Moreover, if $1<\alpha<2$ then, for any $\beta>0$, the following inequality holds 
$$
\|\rho\|_{L^\infty}\leq c(\alpha,\beta).
$$
\end{teo}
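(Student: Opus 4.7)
The plan is to sharpen the max-point argument of Theorem~\ref{Max1_d2}. Set $M(t) = \|\rho\|_{L^\infty}(t) = \rho(x_t, t)$; then Rademacher's theorem together with the vanishing of $\nabla\rho(x_t)$ reduce the PDE at the maximum to
$$\frac{dM}{dt} \leq -\beta\,\Lambda\rho(x_t) + M(M - 1).$$
The target will be the pointwise lower bound $\Lambda\rho(x_t) \geq (M^2 - 1)/(4\pi^2)$, because this upgrades the above inequality to $\frac{dM}{dt} \leq -(M-1)$ whenever $\beta \geq 4\pi^2$, and a direct integration then yields the stated exponential decay.

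The heart of the argument is a water-filling bound on $\Lambda\rho(x_t)$. With $z = x_t - y$ and $g(z) := M - \rho(x_t - z)$, the integral representation gives
$$\Lambda\rho(x_t) = \frac{1}{2\pi}\int_{-\pi}^{\pi}\frac{g(z)}{\sin^2(z/2)}\,dz,$$
under the constraints $0 \leq g \leq M$, $g(0) = 0$, and (from mass conservation) $\int g\,dz = 2\pi(M-1)$. Since $1/\sin^2(z/2)$ is strictly decreasing in $|z|$ on $(0,\pi)$, a standard linear-programming argument forces the infimum of this weighted integral over admissible $g$ to be attained by the bang-bang profile $g^\star(z) = M\,\mathbf{1}_{\{|z|\geq \pi/M\}}$. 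Evaluating this extremal integral via $\int\csc^2(z/2)\,dz = -2\cot(z/2)$, I obtain $\Lambda\rho(x_t) \geq \frac{2M}{\pi}\cot\!\left(\frac{\pi}{2M}\right)$, and the target reduces to the one-variable inequality $8\pi M\,\cot(\pi/(2M)) \geq M^2 - 1$ for $M \geq 1$. The substitution $x = \pi/(2M) \in (0,\pi/2]$ turns this into $16 x \cot x \geq 1 - 4x^2/\pi^2$, which is easily confirmed (equality at $x = \pi/2$, left side tending to $16$ and right side to $1$ as $x\to 0$, with a derivative comparison covering the interval).

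For the supercritical range $1 < \alpha < 2$ and any $\beta > 0$, the same strategy applies: the periodic kernel of $\Lambda^\alpha$ behaves like $|z|^{-1-\alpha}$ near zero, and the same bang-bang computation yields $\Lambda^\alpha\rho(x_t) \geq C_\alpha(M^{1+\alpha} - 1)$ for an explicit $C_\alpha > 0$. Because $1+\alpha > 2$, this super-quadratic dissipation overtakes $M(M-1)$ once $M$ exceeds a threshold $M_\beta = c(\alpha,\beta)$; at such $M$ the differential inequality forces $\frac{dM}{dt}<0$, so $M(t) \leq \max(\rhomm, M_\beta)$ for all time. The chief obstacle is the water-filling optimisation itself: one must verify that the bang-bang minimiser is attained and that the natural smoothness constraint $g(z) = O(z^2)$ near $z = 0$ (enforced by $\rho \in C^2$ and $\nabla\rho(x_t) = 0$) does not raise the infimum. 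Both points are handled by approximating $g^\star$ by smooth admissible profiles; the explicit constant $4\pi^2$ then emerges cleanly from the $\cot$ asymptotics.
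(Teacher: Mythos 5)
Your proposal is correct and reaches the stated constant $4\pi^2$, but the key step is organised differently from the paper's. Both arguments evaluate the equation at the maximum point, drop the transport term (since $\nabla\rho(x_t)=0$), and close a differential inequality for $M(t)=\rhomax(t)$ by bounding $\Lambda\rho(x_t)$ from below using only $\rho\ge0$ and the mass constraint $\int_\TT\rho=2\pi$. The paper does this by fixing a window $\Omega_\delta=\{|x_t-y|\le2\delta\}$, splitting it into the set where $\rho(x_t)-\rho(y)\ge\rho(x_t)/2$ and its complement (whose measure is controlled by Chebyshev through the mass constraint), and optimising over $\delta$; the optimal choice $\delta=2\pi/\rho(x_t)$ must satisfy $\delta\le\pi/2$, which forces a case split at $\rho(x_t)=4$ and an appeal to Theorem~\ref{Max1_d2} for smaller maxima. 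Your bathtub/rearrangement argument instead identifies the exact extremal profile and yields the sharper bound $\Lambda\rho(x_t)\ge\frac{2M}{\pi}\cot\bigl(\frac{\pi}{2M}\bigr)$, valid for every $M\ge1$ with no case split; the price is the elementary one-variable inequality $16x\cot x\ge1-4x^2/\pi^2$ on $(0,\pi/2]$, which does hold (equality only at $x=\pi/2$, i.e.\ $M=1$, and with a wide margin elsewhere). Two small remarks. First, your concern about whether the smoothness constraints on $g$ ``raise the infimum'' is moot: you only need a lower bound for the one admissible $g$ produced by $\rho$, and the bathtub principle over the larger unconstrained class already supplies it --- extra constraints could only increase the infimum, which is harmless, so the approximation step can be deleted. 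Second, for $1<\alpha<2$ the bang-bang computation actually gives $C_\alpha(M^{1+\alpha}-M)$ rather than $C_\alpha(M^{1+\alpha}-1)$, but since $1+\alpha>2$ this still dominates $M(M-1)$ beyond an explicit threshold $M_\beta$, and your conclusion $M(t)\le\max(\rhomm,M_\beta)$ is exactly what the paper's (equally terse) treatment of that case delivers.
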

%--------------------------------------------------------------------------

\begin{proof}
Let us consider the neighbourhood around the absolute density maximum
$$
\Omega_\delta=\{\, y ~|~ |x_t-y|\leq2\delta\, \},
$$
where $\delta$ is a parameter that will be set later, and decompose it in two subsets, $\Omega_\delta = \mathcal{U}_1\cup\mathcal{U}_2$, such that
$$
\mathcal{U}_1=\{\, y\in\Omega_\delta ~|~ \rho(x_t)-\rho(y)\geq \rho(x_t)/2\, \},
$$
and
$$
\mathcal{U}_2=\{\, y\in\Omega_\delta ~|~ \rho(x_t)-\rho(y)<\rho(x_t)/2\, \}.
$$

Since $|x|\geq |\sin(x)|$,
$$
\Lambda \rho(x_t)
~\geq~
\frac{1}{2\pi}\int_{\Omega_\delta} \frac{\rho(x_t)-\rho(y)}{\left(\frac{x_t-y}{2}\right)^{2}}dy
~\geq~
\frac{1}{2\pi}\int_{\mathcal{U}_1} \frac{ \rho(x_t)/2 }{ \delta^2 }dy
~=~
\frac{1}{2\pi}\frac{\rho(x_t)}{2\delta^{2}} \left(|\Omega_\delta|-|\mathcal{U}_2|\right).
$$
Substituting $|\Omega_\delta| = 4\delta$ and using that
$$
2\pi = \int_\TT \rho(y)dy ~>~ \frac{\rho(x_t)}{2}|\mathcal{U}_2|,
$$
one obtains the lower bound
$$
\Lambda \rho(x_t) ~>~ \frac{1}{2\pi} \frac{\rho(x_t)}{2\delta^{2}} \left[ 4\delta - \frac{4\pi}{\rho(x_t)} \right],
$$
which is maximized for the choice
$$
\delta = \frac{ 2\pi }{ \rho(x_t) }.
$$
It is important to note, though, that the compactness of the domain also imposes that $\delta \leq \pi/2$ or, in other words,
$$
\rho(x_t)\geq 4.
$$

Inserting the bound for the dissipative term in equation~\eqref{eq7}, we have
$$
\pat \rho(x_t)
\leq
- \beta\,\frac{ \rho^2(x_t) }{ 4\pi^2 } + \rho(x_t) \left[\, \rho(x_t) -1 \,\right]
=
\rho^2(x_t) \left[ 1 - \frac{\beta}{4\pi^2} \right] - \rho(x_t)
$$
If $\beta \geq 4\pi^2$,
$$
\pat \rho(x_t) ~\leq~ - \rho(x_t) ~<~ - \left[\, \rho(x_t) -1 \,\right],
$$
which proves the desired result for $\rho(x_t) \geq 4$.
Theorem~\ref{Max1_d2} implies that this bound (indeed, a stricter one) also applies for $\rho(x_t) < 4$.
This concludes with the first part of the result.
In the case $1<\alpha<2$ we have
$$
\Lambda \rho(x_t) ~>~ \frac{1}{2\pi} \frac{\rho(x_t)}{2\delta^{1+\alpha}} \left[ 4\delta - \frac{4\pi}{\rho(x_t)} \right].
$$
Consequently, we get
$$
\pat \rho(x_t)
\leq
- \beta\,\frac{ \rho^{1+\alpha}(x_t) }{ (2\pi)^{1+\alpha} } + \rho(x_t) \left[\, \rho(x_t) -1 \,\right],
$$
and we conclude the proof.
\end{proof}

\begin{coment}
\label{th_L1norm}
Recall that in 2D \eqref{eq7.1} has a critical mass phenomenon, \emph{i.e.} if $\|\rho_0\|_{L^1(\RR^2)} < 8\pi$, then there is a global solution, while if $\|\rho_0\|_{L^1(\RR^2)} > 8\pi$ there is a finite-time blow-up. If we take $\beta=1$ and an initial datum with arbitrary $L^1$ norm, Theorem \ref{Max} gives us that, if $\|\rho_0\|_{L^1}\leq \frac{1}{2\pi}$ then the maximum decays exponentially and the solution is global.
\end{coment}

\cambio{
\begin{coment}
In order to show that the system tends to the homogeneous state\footnote{\cambio{At least, as far as the $L^\infty$ norm is concerned.}} for large $\beta$, let us consider the evolution of the minimum value $\rho(x_t)$.
Defining
$$ g(x) = 1-\rho(x), $$
$0 \le g(x_t) \le 1$ is maximum, and, as the function is smooth, its evolution is given by
$$
\pat g(x_t) = -\beta\Lambda^\alpha g(x_t) + \rho(x_t) g(x_t).
$$
Following the same reasoning as in Theorem~\ref{Max1_d2}, we get the bound 
$$
\Lambda^\alpha g(x_t) \geq c_{\alpha,d}\, g(x_t),
$$
and therefore
$$
\pat g(x_t) \leq [ \rho(x_t) - \beta c_{\alpha,d} ]\ g(x_t).
$$
Since $\rho(x_t)\leq 1$, the minimum will always increase towards $g(x_t) \to 0$ if $ \beta c_{\alpha,d} > 1 $.
Such condition is automatically fulfilled if $ \beta c_{\alpha,d} \geq \rhomm $ (Theorem~\ref{Max1_d2}) or $ \beta \geq 4\pi^2 $ for $\alpha=1$ in one spatial dimension, where $ c_{1,1} = 1 $ (Theorem~\ref{Max}).
\end{coment}
}

%--------------------------------------------------------------------------
\section{Numerical simulations}
\label{sec_Sims}
%--------------------------------------------------------------------------

In order to simulate the equation \eqref{eq7} we use a Fourier-collocation method (see \cite{can}). We use Fast Fourier Transform techniques to solve the Poisson equation and to approximate the spatial part of \eqref{eq7}. For the time integrator we use an explicit Runge-Kutta of order 4.

We center our attention in the 2D case. The other cases are analogous. We consider an equispaced discretization of our spatial domain $\TT^2$, 
$$\mathcal{T}^2_N = \left\{(x_k,y_l)\,:\, x_k = \frac{\pi}{N} \left(2k-N\right),\, y_l = \frac{\pi}{N}\left (2l-1\right),\, k,l = 0,\ldots, N-1\right\},$$
where $N$ is the number of points on each interval $[-\pi,\pi]$. We consider as our approximate solution the pair $\rho(x,y)$ and $T(\rho)(x,y)$ in the grid $\mathcal{T}^2_N$. We have
$$\rho(x_k,y_l) = \sum_{n,m=-N/2}^{N/2-1} \tilde{\rho}_{n,m} e^{i n\,x_k} e^{i m\,y_l},
\quad -(n^2 + m^2) \tilde{T(\rho)}_{n,m} = 4\pi G \tilde{\rho}_{n,m},\quad \tilde{T(\rho)}_{0,0} = 0,$$
and it is important to remark that the coefficients $\tilde{\rho}_{n,m}$ are related to the so-known Fourier coefficients, typically $\hat{\rho}_{n,m}$, but they are not the same (see \cite{can}). 
In the same way we can approximate all terms in \eqref{eq7.1}. Once we compute derivatives of $\rho$ and $U$ via the FFT in the equation, we get back to the physical space using the IFFT.

This ends the space discretization part. We consider our equation in the time interval $[0,\tau]$, for some $\tau>0$. As before, let $\mathcal{G}_J$ an equispaced partition of the time interval,
$$\mathcal{G}_J = \{t_s\,:\, t_s = s \tau/J,\, s = 0,\ldots, J-1\},$$
where $J$ is the number points in the time grid. For the evolution part of the equation, $\pat \rho$, we are going to use a Runge-Kutta method (specifically, the classic explicit method of order 4), and we start with the initial data
$$\rho_0(x,y) = |\sin x| + |\sin y|$$
represented in Figure~\ref{2d_0}.

%--------------------------------------------------------------------------
\begin{figure}[ht]
\includegraphics[scale=0.45]{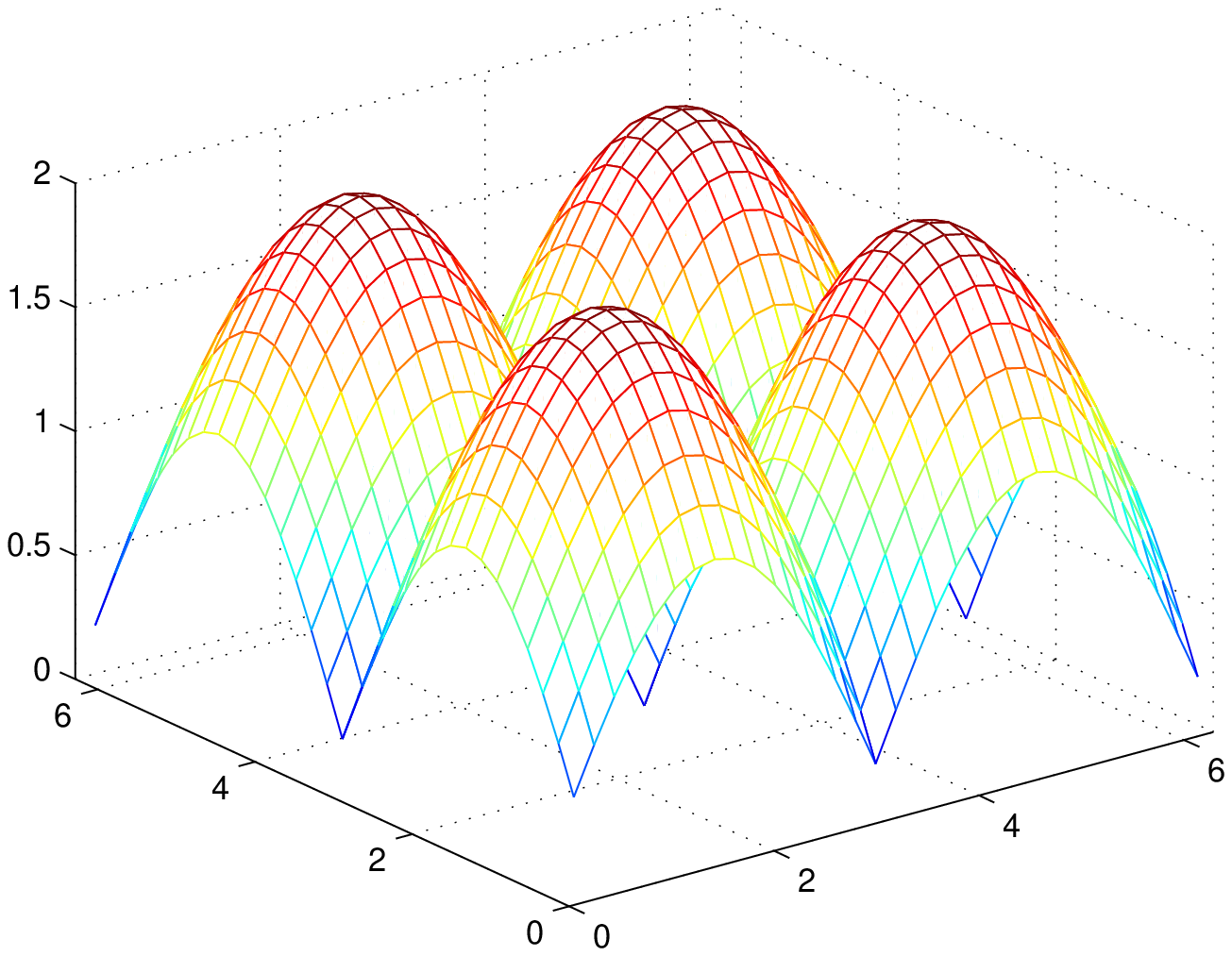} \includegraphics[scale=0.45]{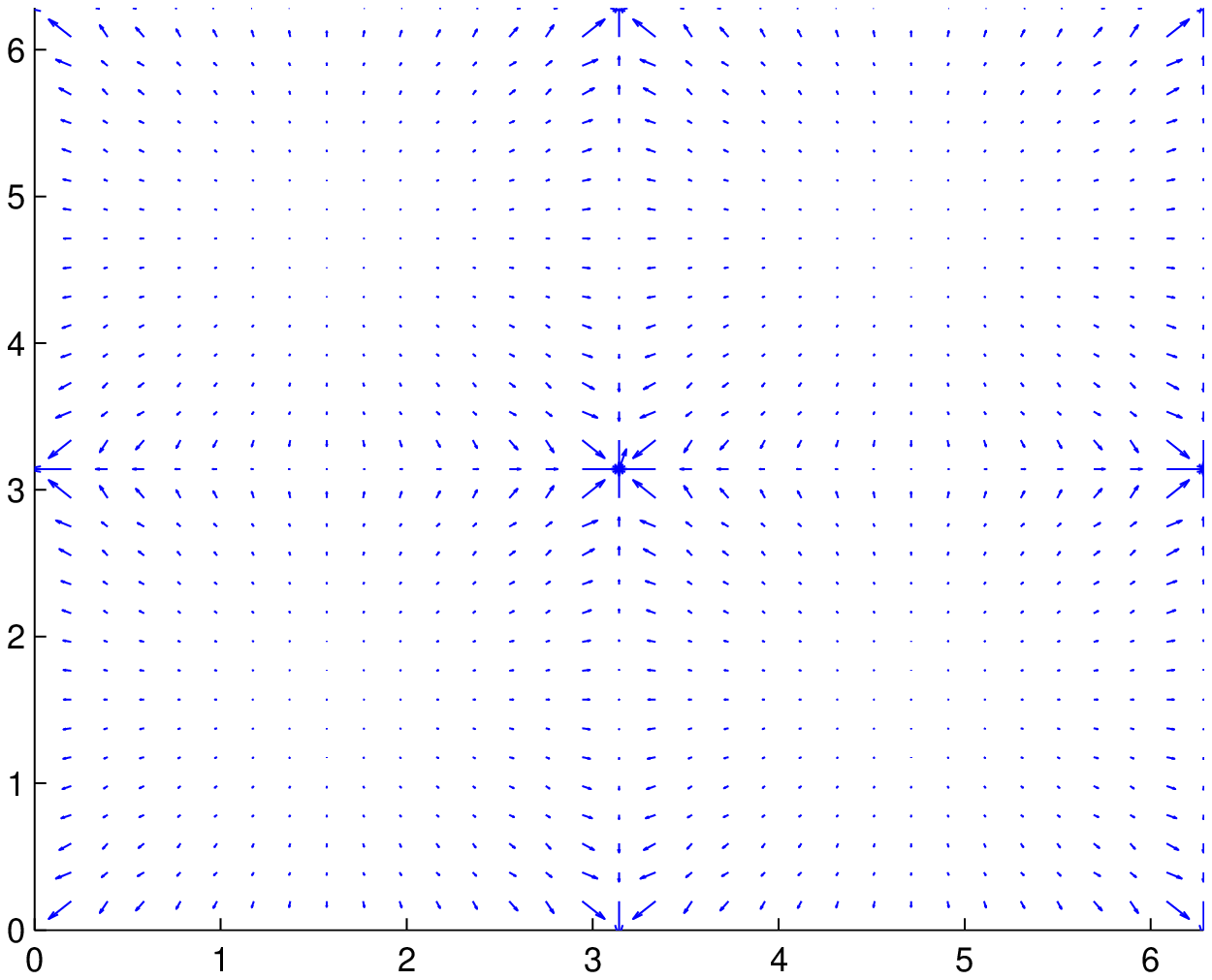} 	
\caption{Initial data for $\rho$ and $v$ with $\beta=10.$}
\label{2d_0}
\end{figure}
%--------------------------------------------------------------------------
\begin{figure}[ht]
\includegraphics[scale=0.45]{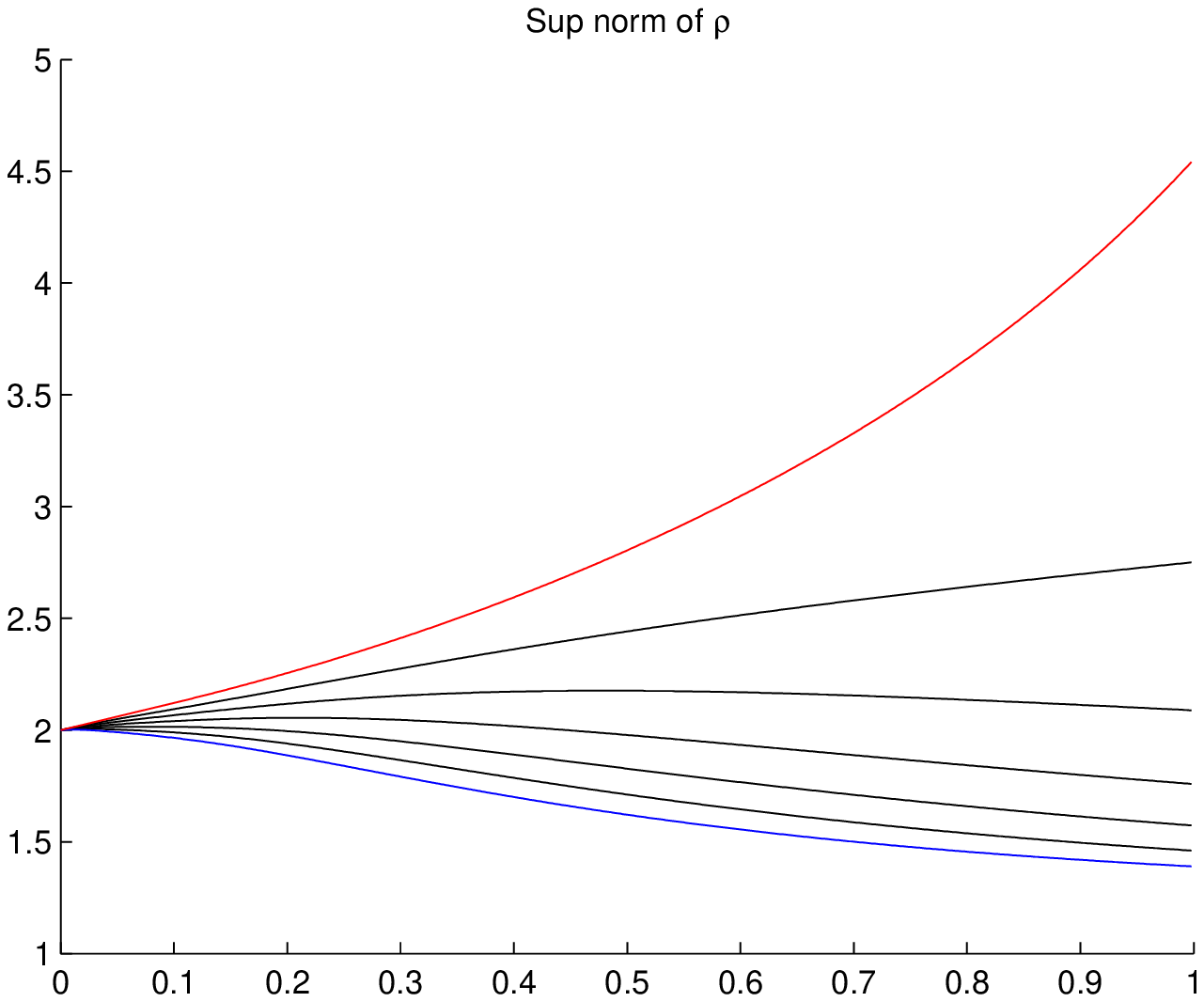} \includegraphics[scale=0.45]{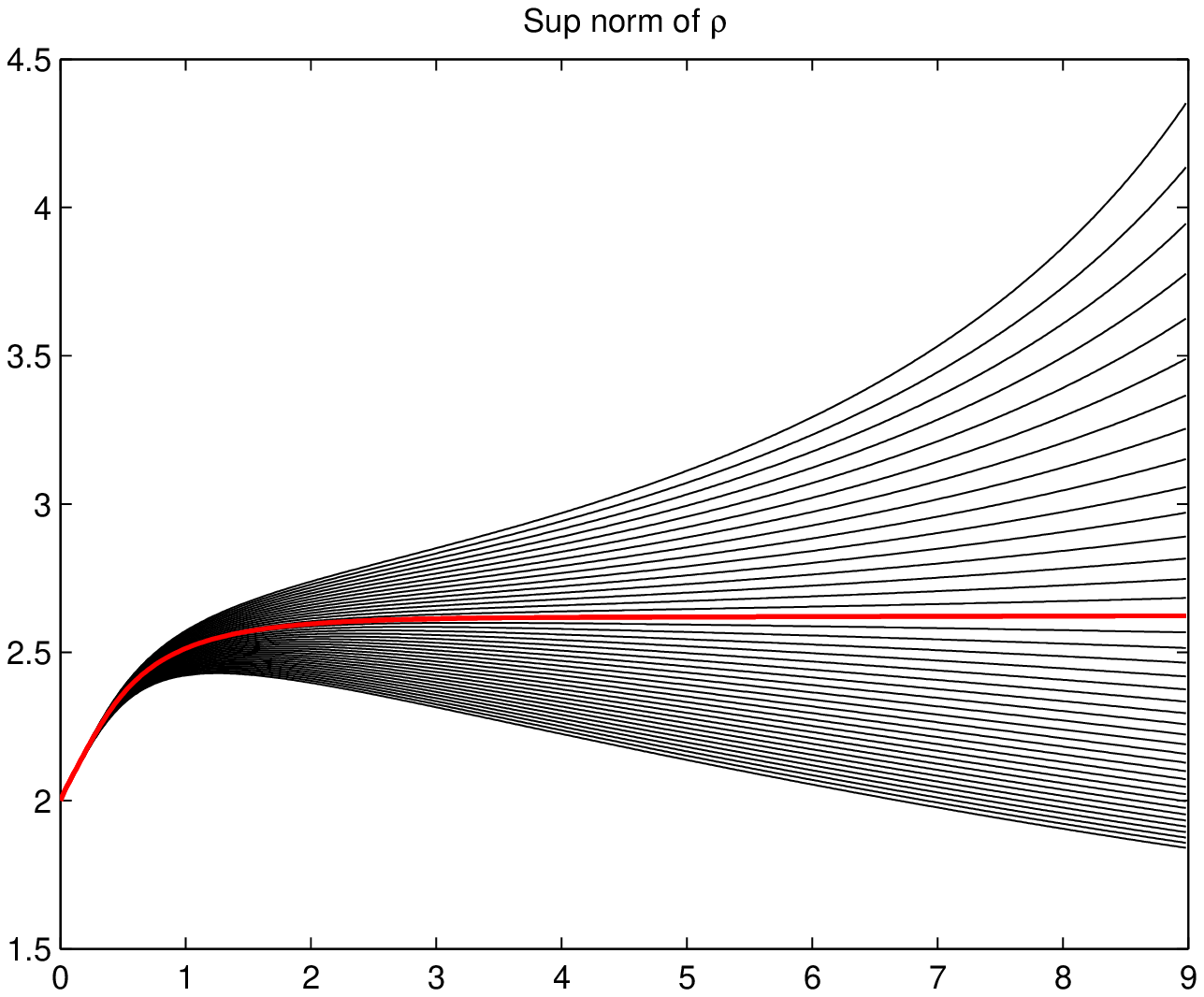} 	
\caption{a) The evolution of $\rhomax(t)$ for $\beta=0.2$ up to $0.8$, with final time $T=1$, and b) the evolution of $\rhomax(t)$ for $\beta=0.32$ up to $0.34$, with final time $T=9$.}
\label{2d_1}
\end{figure}
%--------------------------------------------------------------------------

We studied the evolution of $\rhomax(t)$ for various $\beta\in [0,1]$.
On the left panel in Figure~\ref{2d_1}, we highlight in red the case $\beta=0.2$ and in blue $\beta=0.8$.
On the right panel, we plot the results for values of  $\beta$ in the interval $[0.32, 0.34]$.

The mechanism for singularity formation, i.e. the blow-up for $\rhomax(t)$, is illutrated in Figure~\ref{2d_blow}.
In Figure~\ref{2d_3}, one can see how the solution is depleted for large $\beta$.

%--------------------------------------------------------------------------
\begin{figure}[ht]
\includegraphics[scale=0.45]{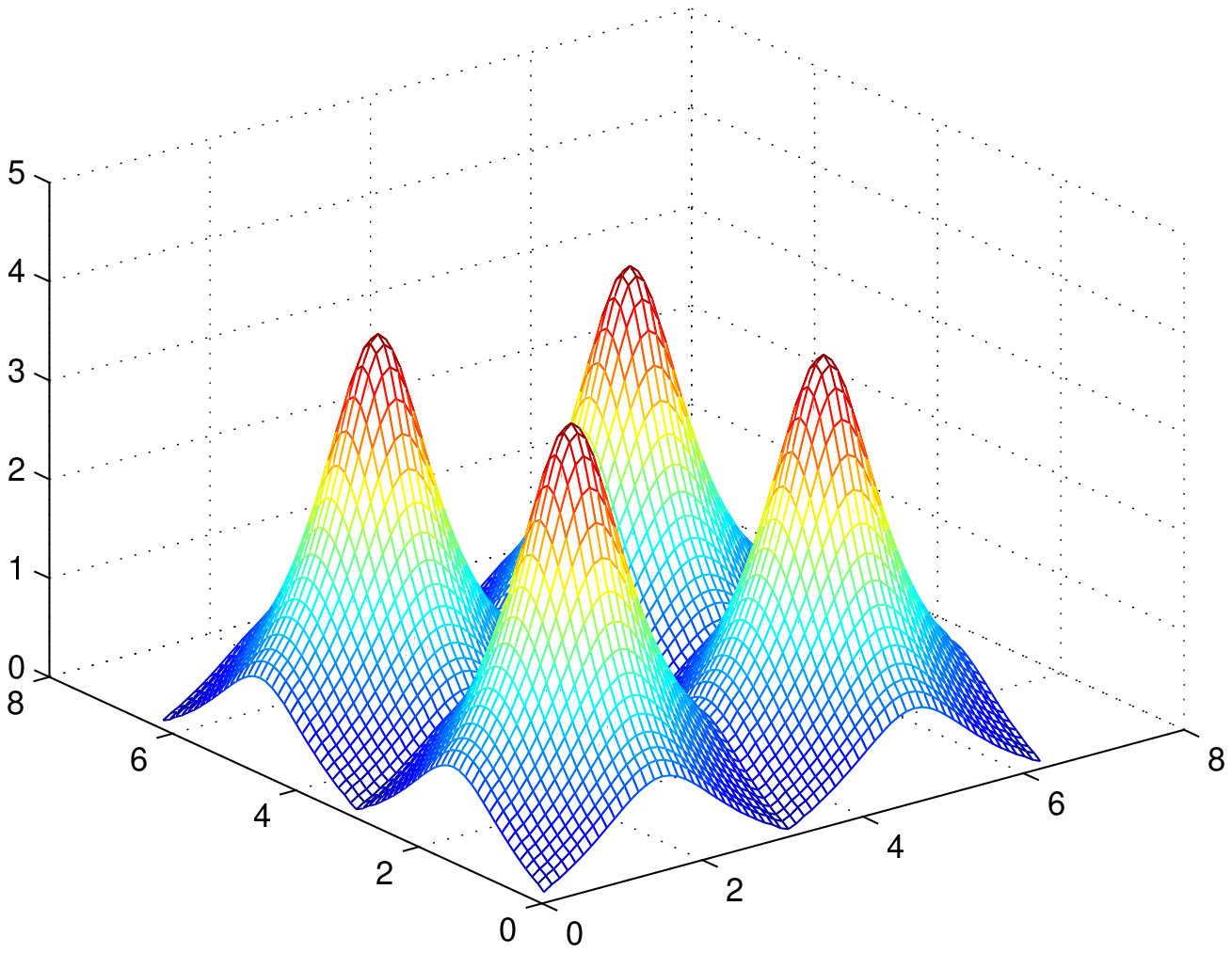} \includegraphics[scale=0.45]{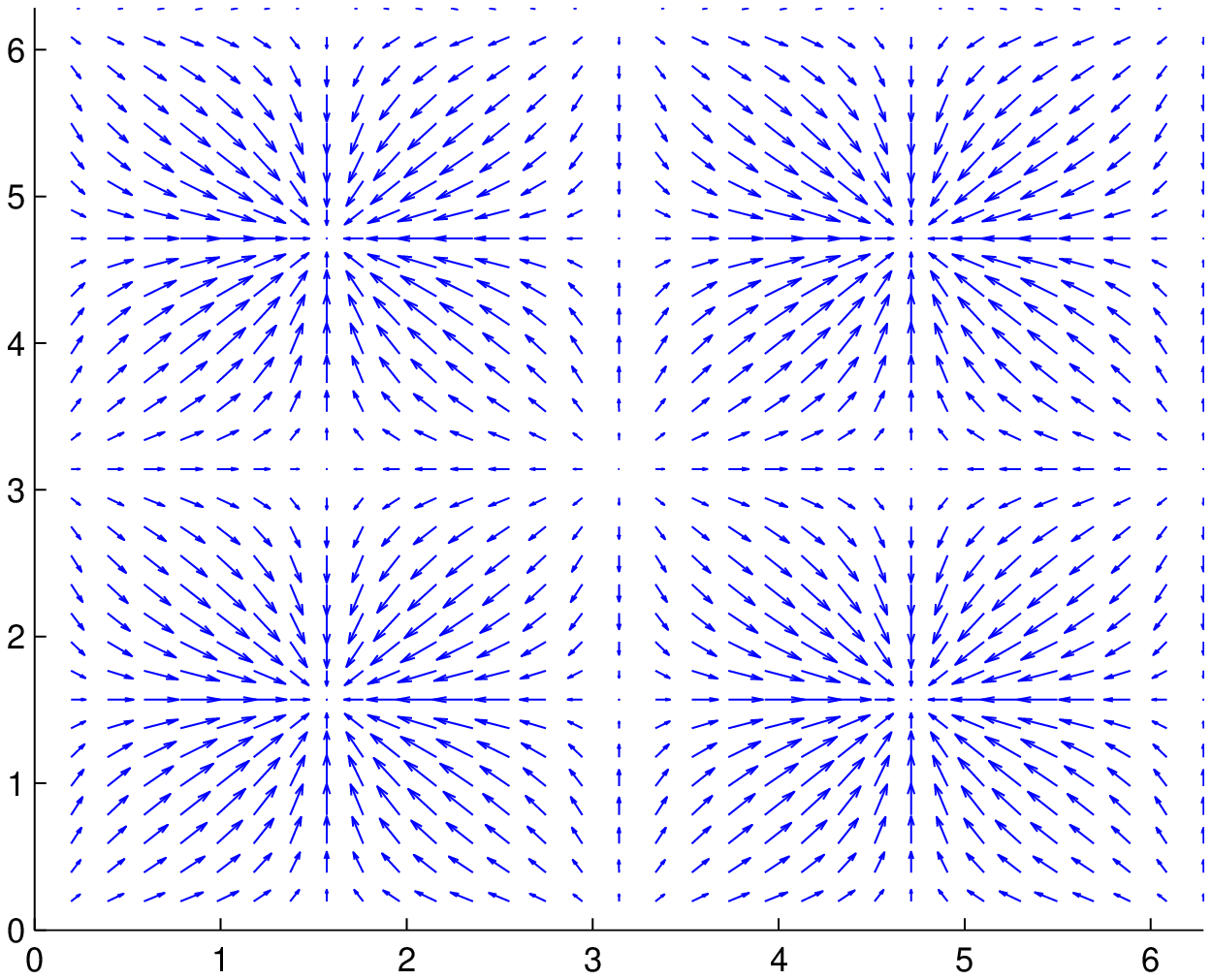} 	
\caption{a) $\rho$ for time $t=0.5$ and $\beta=0$ and b) $v$ at the same time.}
\label{2d_blow}
\end{figure}
%--------------------------------------------------------------------------
\begin{figure}[ht]
\includegraphics[scale=0.45]{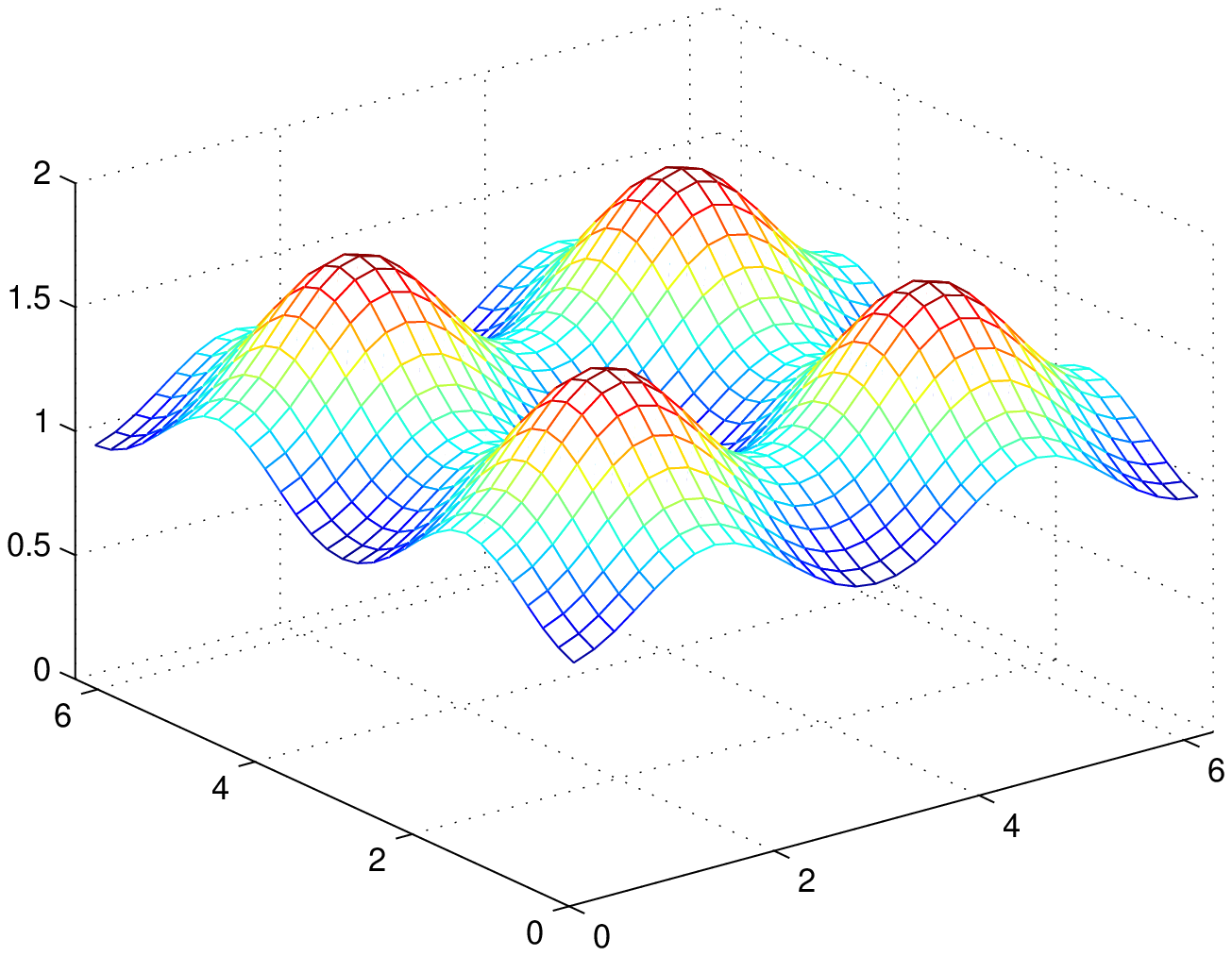} \includegraphics[scale=0.45]{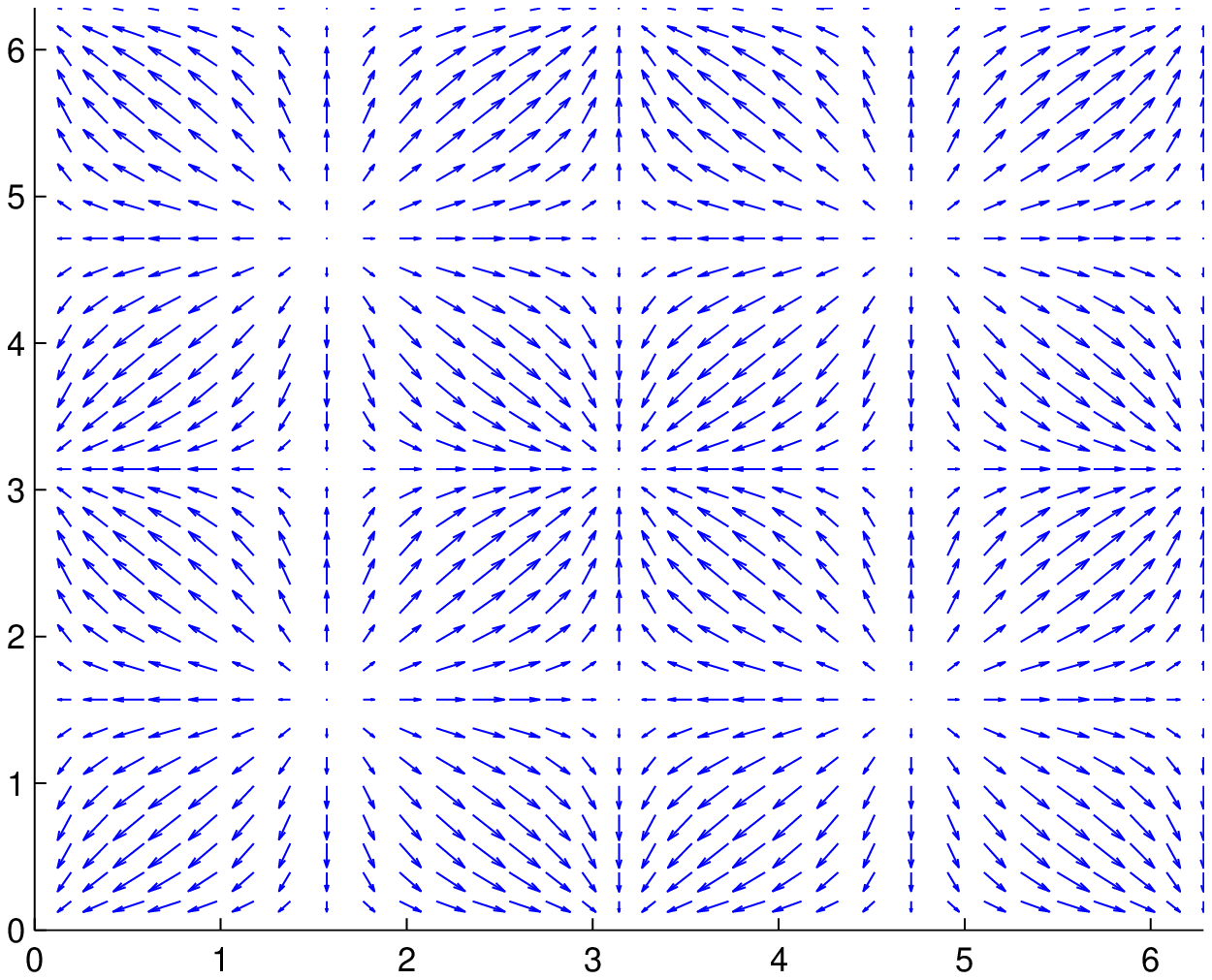} 	
\caption{a) $\rho$ for time $t=1$ and $\beta=0.5$ and b) $v$ at the same time.}
\label{2d_3}
\end{figure}
%--------------------------------------------------------------------------

%--------------------------------------------------------------------------
\section*{Acknowledgments}
%--------------------------------------------------------------------------
We would like to thank the anonymous referees for a critic, yet extremely constructive report, that has improved the clarity of the presentation, and has helped us to put our results into context.
Y.~Ascasibar receives financial support from project AYA2010-21887-C04-03 from the former \emph{Ministerio de Ciencia e Innovaci\'on} (MICINN, Spain), as well as the \emph{Ram\'{o}n y Cajal} programme (RyC-2011-09461), now managed by the \emph{Ministerio de Econom\'{i}a y Competitividad} (fiercely cutting back on the Spanish scientific infrastructure).
R.~Granero is supported by grants MTM2008-03754, MTM2011-26696 and SEV-2011-0087 of the MICINN.

\appendix

%--------------------------------------------------------------------------
\section{The `Jeans swindle'}
\label{sec_Jeans}
%--------------------------------------------------------------------------

It is fairly common to see the Poisson equation expressed as
$$ \Delta U = \rho $$
without including the average density term that appears in equation~\eqref{eq_Poisson}.
Both expressions are equivalent for any finite mass (or, in general, charge) distribution embedded in an infinite space, because such configurations fulfil by definition $\rh=0$.
There are, however, important differences when this is not the case, and we would like to advocate the use of equation~\eqref{eq_Poisson} in order to describe an infinite system.
The convenience the \rh\ term is very clearly illustrated by considering the Fourier transform of equation~\eqref{eq_Poisson}
$$ -k^2\, \hat U_k = \hat \rho_k - \rh\delta_{k0} $$
for $k=0$.
If $\hat\rho_0 \equiv \rh \neq 0$ and the extra term was not included, the Poisson equation could never be solved for a periodic system.

The pioneering analysis \cite{Jeans_1902} by Sir James H. Jeans at the beginning of the past century was the first attempt to address the stability of an initially uniform gas cloud at a given temperature $T$ and density $\rho_0$ -- \emph{assumed} to be in equilibrium -- against the growth of arbitrarily small perturbations.
The assumption of initial equilibrium is inconsistent with the Poisson equation when the \rh\ term is neglected\footnote{Otherwise, it is easy to verify that a solution of the form $\rho(\vv{r},t) = \rho_0\, a(t)$ can be found, where the cosmic scale factor $a(t)$ is exactly the same as in a fully relativistic Lemaître-Friedmann-Robertson-Walker universe.
As shown in~\cite{A04}, the analogy between Newtonian and relativistic dynamics can be trivially extended to a spherically-symmetric Tolman-Bondi universe with cosmological constant.
The effect of the \rh\ term is indeed so similar to a constant `vacuum energy density' that it is tempting to associate them.
Some claims have already been made in this direction \cite{CalderLahav08}, and many studies attempt to accommodate current observational data on the accelerated expansion of the universe in terms of inhomogeneous models without a dark energy component (see e.g \cite{Buchert08} for a recent review).}, and is often referred to in the Astrophysical literature as the `Jeans swindle' (see e.g. \cite{BT}).
Nevertheless, several \cambio{recent works \cite{Ki03,Joy08,Ersh11,Chf,Fal13}} vindicate its validity using a variety of arguments, and we would like to claim that the reasoning above supports their main conclusions.
In particular, we claim that the law of gravity should be such that an infinite fluid with constant density is an (unstable) static equilibrium solution, where the net force felt by every point, averaged over such a homogeneous and isotropic background, vanishes.

%--------------------------------------------------------------------------
% \subsection{Jeans criterion}
% \label{sec_Jeans_crit}
%--------------------------------------------------------------------------

By linearizing the hydrodynamic equations and decomposing the perturbations in Fourier modes, one finds the dispersion relation
\begin{equation}
\label{eq_Jeans}
 \omega^2 = k^2\cs - S_d G \rho_0
\end{equation}
where $\omega$ denotes the angular frequency of the oscillation and $k\equiv 2\pi/\lambda$ its wave number.
Perturbations below the \emph{Jeans length}
$$ \lambda_{\rm J} = \frac{ 2\pi c_{\rm s} }{ \sqrt{S_d G\rho_0} } $$
will thus oscillate as as acoustic waves, whereas any disturbance on larger scales will be exponentially amplified.
Recalling that $\beta=\frac{4\pi^2\cs}{S_d\,G\rh L^2}=\frac{\lambda_{\rm J}^2}{L^2}$, Jeans' result would be expressed in the notation of the present paper by saying that a uniform gas cloud with $\beta\le 1$ is linearly unstable.
For larger values of $\beta$, pressure forces are more important in relation to gravity, and \emph{small} perturbations, i.e. $0 < \rhomm-1 \ll 1$, do not grow.

However, this criterion has an important shortcoming: nothing can be said about the evolution of the system when $\rhomm\gg1$.
An equivalent result for equation~\eqref{eq7} would be the following:

%--------------------------------------------------------------------------
\begin{lem}[Spectral analysis]
\label{th_Jeans}
Given the linearized version of equation~\eqref{eq7} with $0<\alpha<2$, small perturbations of the homogeneous state, $\rho_0=1+\delta$ with $\langle\delta\rangle=0$ and $\Vert\delta\Vert_{L^\infty} \ll 1$, are unstable if $\beta\le 1$, and they are damped if $\beta>1$.
\end{lem}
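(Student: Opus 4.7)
The plan is to linearise equation \eqref{eq7} around the homogeneous equilibrium $\rho\equiv 1$ and reduce the resulting constant-coefficient linear PDE to a diagonal system in Fourier space. Writing $\rho = 1+\delta$ with $\langle\delta\rangle = 0$, I would exploit three elementary cancellations: first, $\Lambda^\alpha$ annihilates constants, so $\Lambda^\alpha\rho = \Lambda^\alpha\delta$; second, the Poisson inverse satisfies $T(\rho) = T(\delta)$ (up to an irrelevant additive constant), since $\rho - \langle\rho\rangle = \delta$; and third, $\rho(\rho-1) = \delta + \delta^2$ cleanly splits into a linear and a quadratic piece. Consequently both $\delta^2$ and $\nabla\delta\cdot\nabla T(\delta)$ are $O(\|\delta\|^2)$, and discarding them leaves the linearised equation
$$\pat\delta = -\beta\Lambda^\alpha\delta + \delta.$$

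Next I would expand $\delta$ in a Fourier series on $\TT^d$. The hypothesis $\langle\delta\rangle=0$ eliminates the $k=0$ mode, and the mass-conservation lemma proved earlier guarantees that this property persists in time, so the expansion effectively runs over $k\in\ZZ^d\setminus\{0\}$. Each Fourier coefficient then satisfies the scalar ODE
$$\frac{d}{dt}\hat\delta_k(t) = \bigl(1-\beta|k|^\alpha\bigr)\hat\delta_k(t),$$
with explicit solution $\hat\delta_k(t) = \hat\delta_k(0)\,e^{(1-\beta|k|^\alpha)t}$.

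The conclusion then follows by inspecting the growth rate. The smallest admissible wave number on $\TT^d$ has $|k|=1$, giving a growth factor of $e^{(1-\beta)t}$ for the gravest modes. If $\beta>1$, then $1-\beta|k|^\alpha<0$ for every $k\neq 0$, and Parseval's identity yields the exponential decay estimate $\|\delta(\cdot,t)\|_{L^2}\le e^{-(\beta-1)t}\|\delta_0\|_{L^2}$, so every small perturbation is damped. If $\beta\le 1$ the $|k|=1$ modes fail to decay, and for $\beta<1$ they grow at the strictly positive rate $1-\beta$, so the uniform state is linearly unstable. There is no real obstacle in this argument; once the three cancellations above are recorded, the whole proof amounts to solving a one-parameter family of scalar ODEs in Fourier space. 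The only mild subtleties are the appeal to mass conservation to rule out excitation of a $k=0$ mode and the elementary observation that $|k|\ge 1$ for nonzero integer modes, which fixes the Jeans-like threshold at $\beta=1$.
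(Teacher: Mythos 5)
Your proof is correct and follows exactly the paper's route: the paper likewise linearises to $\pat\delta=-\beta\Lambda^\alpha\delta+\delta$ and concludes by taking the Fourier transform, leaving implicit the mode-by-mode ODEs and the observation that $|k|\geq 1$ fixes the threshold at $\beta=1$, which you simply spell out. Nothing is missing or different in substance.
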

%--------------------------------------------------------------------------
\begin{proof}
From the linearized version of equation~\eqref{eq7}
$$
\pat \delta = -\beta \Lambda^{\alpha}\delta + \delta
$$
the proof follows by taking the Fourier transform.
\end{proof}

An analogous result for equation~\eqref{eq7.1} has been derived by \cite{Chb,Chc}.
The condition for linear (in)stability is indeed the same for the three sets of equations.
In the fully non-linear regime, Theorem~\ref{Max1_d2} proves stability against perturbations of magnitude $\rhomm<c_{\alpha,d}\beta$ (above such threshold, Theorem 1.10 of \cite{LRZ} may apply).
For $d=1$, gravitational collapse cannot occur at all if $1<\alpha<2$ o $\alpha=1$ and $\beta\geq 4\pi^2$ (Theorem~\ref{Max}), the latter condition being equivalent to $\|\rho_0\|_{L^1}\leq \frac{1}{2\pi}$ (see Remark in Section~\ref{th_L1norm}).

%--------------------------------------------------------------------------
\section{The `Darcy approximation'}
\label{sec_Darcy}
%--------------------------------------------------------------------------

As shown in the Introduction, the approximation~\eqref{eq_Darcy} that the velocity is proportional to the acceleration leads immediately to equation~\eqref{eq7.1}.
Here we would like to justify the validity of such approximation and discuss the main similarities and differences with respect to the original problem, as well as other, more traditional approaches.

First, let us note that the family of hydrostatic equilibrium solutions of the full problem,
$$
\nabla P + \rho \nabla U = 0,
$$
are also equilibrium solutions of our approximation, since, taking the divergence of the above expression,
$$
\beta \Delta\rho + \rho(\rho-1) + \nabla\rho \cdot \nabla T(\rho) = 0.
$$

One of these equilibrium solutions is the homogeneous state $\rho(\vv{r},t)=\rh$ that we take as a starting point in the present work.
If we now perturb a `spherical' region of radius $R$ and mass $M=\rh S_d R^d/d$ by a small displacement $r=R[1+\epsilon(R)]$ with $\epsilon(R)\ll 1$ for all $R$, and the perturbed density decreases monotonically with radius\footnote{This will always be the case in the vicinity of a density peak, and, for Gaussian random fluctuations, it will also hold, on average, at large radii \cite{Bardeen+86}.}, the different spherical shells will not cross during the collapse phase, and the enclosed mass $M(r)$ will be conserved for every shell.
Applying Gauss theorem to equation~\eqref{eq_Poisson}, their evolution will be given by
$$ S_d\, r^{d-1}\, \nabla U(r) = S_d\, G \left( M - \rh S_d\,\frac{r^d}{d}\, \right) $$
and therefore, neglecting pressure forces (by imposing that the initial displacement $\epsilon$ is sufficiently smooth),
$$
\ddot r = - \nabla U(r)
= \frac{ G S_d\,\rh }{ d } \left[ 1- \left(R/r\right)^d \right]
= \frac{ G S_d\,\rh }{ d } \left[ 1- \frac{1}{(1+\epsilon)^d} \right].
$$
To first order, $\ddot\epsilon = G S_d\,\rh \epsilon$, and we recover expression~\eqref{eq_Darcy} with a characteristic dynamical time
\begin{equation}
\label{eq_tau}
\tau \equiv \frac{ \dot\epsilon }{ \ddot\epsilon }
= \frac{ 1 }{ \sqrt{ S_d\,G\rh } }.
\end{equation}

The `Darcy approximation' consists in assuming that this relation, strictly valid close to equilibrium, holds at all times.
As pointed out in~\cite{Chb}, the main difference between the Euler-Poisson and the Smoluchowski / Patlak-Keller-Segel model is that, in the latter, perturbations below the Jeans length do not oscillate as sound waves but are exponentially damped.

The extrapolation of Equation~\eqref{eq_Darcy} beyond the linear regime is very similar in spirit (though not exactly equivalent) to the Zel'dovich approximation \cite{Zeldovich70} used in Cosmology to study the formation of large-scale structure in the primordial universe.
In this approximation, particles move along straight trajectories of the form
$$
\vv{r}(t) = a(t)\, \vv{q} + b(t)\, \vv{p}(\vv{q})
$$
where $\vv{r}$ denotes the actual (Eulerian) position of the particle, $\vv{q}$ is its Lagrangian coordinate, the factor $a(t)$ accounts for cosmic expansion, and the vector function $\vv{p}(\vv{q})$ is determined by the initial conditions.
The evolution of the inhomogeneities is given by the growth factor $b(t)$, chosen to match the results of Eulerian linear theory.
For an Einstein-deSitter cosmology, which always provides a valid approximation for a matter-dominated universe at early times, $b = a^2$, $\left( \frac{ \dot{a} }{ a } \right)^2 = \frac{ 8\pi G }{ 3 } \rh$, and $\frac{ \ddot{a} }{ a } = \frac{ 4\pi G }{ 3 } \rh$.
Working in comoving coordinates, $\vv{x}=\vv{r}/a$, one obtains
$$
\frac{ \dot{x} }{ \ddot{x} } = \frac{ \dot{a} }{ \ddot{a} } = \sqrt{ \frac{ 3 }{ 2 \pi G \rh } }
$$
analogously to equation~\eqref{eq_tau}, although in this case $\rh$ depends on time due to the expansion of space (see \cite{BuchertDominguez05} for a detailed discussion of the cosmological implications of the parallelism between peculiar velocities and accelerations).

Again, the most important difference with respect to the Euler equation is that~\eqref{eq7.1} is dissipative.
However, we would like to note that, in order to prevent caustic formation and shell-crossing in the Zel'dovich approximation, a small dissipative viscosity is introduced, leading to the so-called adhesion model \cite{Gu89,Gu11}.
From a microphysical point of view, this viscosity may arise from random noise in the trajectories of the particles.
It has been shown \cite{Chd,Che} that equation~\eqref{eq7.1} can be obtained for a gas of self-gravitating Brownian particles in the mean field limit with strong friction.
Along these lines, our generalized equation~\eqref{eq7} would arise if the random perturbation (due, for instance, to two-body encounters between particles) was described by a L\'evy noise.

\vspace{1cm}
%--------------------------------------------------------------------------

\end{document}